\theoremstyle{definition}
\newtheorem{theorem}{\textbf{Theorem}}
\newtheorem{lemma}{\textbf{Lemma}}
\newcommand{\up}[1]{^\mathrm{#1}}
\newcommand\Tstrut{\rule{0pt}{2.6ex}}         
\newcommand\Bstrut{\rule[-0.9ex]{0pt}{0pt}}   
\newcommand{\subalign}[1]{%
	\vcenter{%
		\Let@ \restore@math@cr \default@tag
		\baselineskip\fontdimen10 \scriptfont\tw@
		\advance\baselineskip\fontdimen12 \scriptfont\tw@
		\lineskip\thr@@\fontdimen8 \scriptfont\thr@@
		\lineskiplimit\lineskip
		\ialign{\hfil$\m@th\scriptstyle##$&$\m@th\scriptstyle{}##$\crcr
			#1\crcr
		}%
	}
}
\begin{document}
	
	\title{Factoring the Cycle Aging Cost of Batteries \\ Participating in Electricity Markets}	
	
	\author{Bolun~Xu,~\IEEEmembership{Student Member,~IEEE,}
		Jinye~Zhao,~\IEEEmembership{Member,~IEEE,}
		Tongxin~Zheng,~\IEEEmembership{Senior Member,~IEEE,}
		Eugene~Litvinov,~\IEEEmembership{Fellow,~IEEE},
		Daniel S.~Kirschen,~\IEEEmembership{Fellow,~IEEE}
		\thanks{B.~Xu and D.S.~Kirschen are with the University of Washington, USA (emails: \{xubolun, kirschen\}@uw.edu). }
		\thanks{J.~Zhao, T.~Zheng, and E.~Litvinov are with ISO New England Inc., USA (emails: \{jzhao, tzheng, elitvinov\}@iso-ne.com). }
	}  
	
	\maketitle
	\makenomenclature
	
	\begin{abstract}
		When participating in electricity markets, owners of battery energy storage systems must bid in such a way that their revenues will at least cover their true cost of operation. Since cycle aging of battery cells represents a substantial part of this operating cost, the cost of battery degradation must be factored in these bids. However, existing models of battery degradation either do not fit market clearing software or do not reflect the actual battery aging mechanism. In this paper we model battery cycle aging using a piecewise linear cost function, an approach that provides a close approximation of the cycle aging mechanism of electrochemical batteries and can be incorporated easily into existing market dispatch programs. By defining the marginal aging cost of each battery cycle, we can assess the actual operating profitability of batteries. A case study demonstrates the effectiveness of the proposed model in maximizing the operating profit of a battery energy storage system taking part in the ISO New England energy and reserve markets.
		
	\end{abstract}
	
	\begin{IEEEkeywords}
		Energy storage, battery aging mechanism, arbitrage, ancillary services, economic dispatch
	\end{IEEEkeywords}

	\IEEEpeerreviewmaketitle

	\section{Introduction}
	
	In 2016, about 200~MW of stationary lithium-ion batteries were operating in grid-connected installations worldwide~\cite{eu_bat}, and more deployments have been proposed~\cite{isone_outlook,cal_rm}.
	To accommodate this rapid growth in installed energy storage capacity, system operators and regulatory authorities are revising operating practices and market rules to take advantage of the value that energy storage can provide to the grid. In particular, the U.S. Federal Energy Regulatory Commission (FERC) has required independent system operators (ISO) and regional transmission organizations (RTO) to propose market rules that  account for the physical and operational characteristics of storage resources~\cite{ferc_rm}. For example, the California Independent System Operator (CAISO) has already designed a market model that supports the participation of energy-limited storage resources and considers constraints on their state of charge (SoC) as well as on their maximum charge and discharge capacity~\cite{caiso_bpm}.

	
	As electricity markets evolve to facilitate participation by battery energy storage (BES), owners of these systems must develop bidding strategies which ensure that they will at least recover their operating cost. Battery degradation must be factored in the operating cost of a BES because the life of electrochemical battery cells is very sensitive to the charge and discharge cycles that the battery performs and is thus directly affected by the way it is operated ~\cite{vetter2005ageing,xu2016modeling}.  Existing models of battery degradation either do not fit dispatch calculations, or do not reflect the actual battery degradation mechanism. In particular, traditional generator dispatch models based on heat-rate curves cannot be used to represent the cycle aging characteristic of electrochemical batteries.

	This paper proposes a new and accurate way to model of the cost of battery cycle aging, which can be integrated easily in economic dispatch calculations. The main contributions of this paper can be summarized as follows:
	\begin{itemize}
		\item It proposes a piecewise linear cost function that provides a close approximation of the cost of cycle aging in electrochemical batteries.
		\item  System operators can incorporate this model in market clearing calculations to facilitate the participation of BES in wholesale markets by allowing them to properly reflect their operating cost.
		\item Since this approach defines the marginal cost of battery cycle aging, it makes it possible for BES owners to design market offers and bids that recover at least the cost of battery life lost due to market dispatch.
		\item The effectiveness of the proposed model is demonstrated using a full year of price data from the ISO New England energy markets.
		\item The accuracy of the proposed model in predicting the battery cycle aging cost is demonstrated using an ex-post calculation based on a benchmark model.
		\item The model accuracy increases with the number of linearization segments, and the error compared to the benchmark model approaches zero with sufficient linearization segments.
	\end{itemize}

	
	Section~II reviews the existing literature on battery cycle aging modeling. Section~III describes the proposed predictive battery cycle aging cost model. Section~IV shows how this model is incorporated in the economic dispatch. Section~V describes and discusses case studies performed using ISO New England market data. Section~VI draws conclusions.

	\section{Literature Review}

	\subsection{Battery Operating Cost}
	
	Previous BES economic studies typically assume that battery cells have a fixed lifetime and do not include the cost of replacing the battery in the BES variable operating and maintenance (O\&M) cost~\cite{kintner2010energy}. The Electricity Storage Handbook from Sandia National Laboratories assumes that a BES performs only one charge/discharge cycle per day, and that the variable O\&M cost of a lithium-ion BES is constant and about 2~\$/MWh~\cite{akhil2013doe}. Similarly, Zakeri \emph{et al.}~\cite{zakeri2015electrical} assume that battery cells in lithium-ion BES are replaced every five years, and assume the same 2~\$/MWh O\&M cost. Other energy storage planning and operation studies also assume that the operating cost of BES is negligible and that they have a fixed lifespan~\cite{pandzic2015near,pozo2014unit,qiu2016stochastic}. 
	These assumptions  are not valid if the BES is cycled multiple times per day because more frequent cycling increases the rate at which battery cells degrade and hasten the time at which they need to be replaced. 
	To secure the battery lifespan, Mohsenian-Rad~\cite{mohsenian2016optimal} caps the number of cycles a battery can operate per day. However, artificially limiting the cycling frequency prevents operators from taking advantage of a BES's operational flexibility and significantly lessens its profitability. To take full advantage of the ability of a BES to take part in energy and ancillary markets, its owner must be able to cycle it multiple times per day and to follow irregular cycles. Under these conditions, its lifetime can no longer be considered as being fixed and its replacement cost can no longer be treated as a capital expense. Instead, the significant part of the battery degradation cost that is driven by cycling should be treated as an operating expense. 
	
	A BES performs temporal arbitrage in an electricity market by charging with energy purchased at a low price, and discharging this stored energy when it can be sold at a higher price. The profitability of this form of arbitrage depends not only on the price difference but also on the cost of the battery cycle aging caused by these charge/discharge cycles. When market prices are stable, the expected arbitrage revenue is small and the BES owner may therefore opt to forgo cycling to prolong the battery lifetime and reduce its cycle aging cost. On the other hand, if the market exhibits frequent large price fluctuations, the BES owner could cycle the BES multiple times a day to maximize its profits. Fig.~\ref{Fig:MP} shows that the price profile in a given market can change significantly from day to day. Although the average market price is higher in Fig.~\ref{Fig:MP_a}, arbitrage is not profitable in this case because the price fluctuations are small, and the aging cost from cycling is likely to be higher than the revenue from arbitrage. On the other hand, a BES owner is likely to perform three arbitrage cycles if the price profile is similar to the one shown on Fig.~\ref{Fig:MP_b}, because the revenue opportunities arising from the large price fluctuations are likely to be larger than the associated cycle aging cost. It is thus crucial to accurately incorporate the cost of cycle aging into the optimal operation of a BES.
	
	\begin{figure}[t]%
		\centering
		\subfloat[An example of stable market prices (Jan 7, 2015).]{
			\includegraphics[trim = 5mm 0mm 10mm 0mm, clip, width = .95\columnwidth]{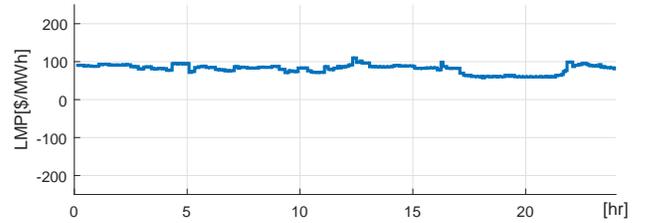}
			\label{Fig:MP_a}%
		}
		\\
		\subfloat[An example of highly variable market prices (Jan 2, 2015).]{
			\includegraphics[trim = 5mm 0mm 10mm 0mm, clip, width = .95\columnwidth]{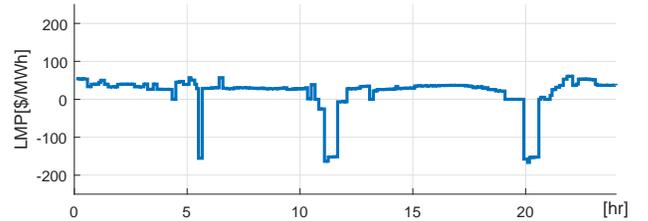}
			\label{Fig:MP_b}%
		}
		\caption{\footnotesize Market price daily variation examples (Source: ISO New England).}%
		\label{Fig:MP}
	\end{figure}

	

	
	\subsection{Electrochemical Battery Degradation Mechanisms}
	
	Electrochemical batteries have limited cycle life~\cite{dunn2011electrical} because of the fading of active materials caused by the charging and discharging cycles. This cycle aging is caused by the growth of cracks in the active materials, a process similar to fatigue in materials subjected to cyclic mechanical loading~\cite{vetter2005ageing,fatemi1998cumulative,li2011crack,wang2011cycle,laresgoiti2015modeling}. Chemists describe this process using partial differential equations~\cite{ramadesigan2012modeling}. These models have good accuracy but cannot be incorporated in dispatch calculations. On the other hand, heuristic battery lifetime assessment models assume that degradation is caused by a set of stress factors, each of which can be represented by a stress model derived from experimental data. The effect of these stress factors varies with the type of battery technology. In this paper, we focus on lithium-ion batteries because they are widely considered as having the highest potential for grid-scale applications. For our purposes, it is convenient to divide these stress factors into two groups depending on whether or not they are directly affected by the way a grid-connected battery is operated: 
	\begin{itemize}
		\item Non-operational factors: ambient temperature, ambient humidity, battery state of life, calendar time~\cite{kassem2012calendar}.
		\item Operational factors: Cycle depth, over charge, over discharge, current rate, and average state of charge (SoC)~\cite{vetter2005ageing}.
	\end{itemize}
	
	\subsubsection{Cycle depth}
	Cycle depth is an important factor in a battery's degradation, and is the most critical component in the BES market dispatch model. A 7~Wh Lithium Nickel Manganese Cobalt Oxide (NMC) battery cell can perform over 50,000 cycles at 10\% cycle depth, yielding a lifetime energy throughput (i.e. the total amount of energy charged and discharged from the cell) of 35~kWh. If the same cell is cycled at 100\% cycle depth, it can only perform 500 cycles, yielding an lifetime energy throughput of only 3.5~kWh~\cite{ecker2014calendar}. This nonlinear aging property with respect to cycle depth is observed in most static electrochemical batteries~\cite{ruetschi2004aging, byrne2012estimating, xu2016modeling, wang2014degradation}. Section~\ref{BES:CA} explains in details our modeling of the cycle depth stress.
	
	\subsubsection{Current rate}
	While high charging and discharging currents accelerate the degradation rate, grid-scale BES normally have capacities greater than 15 minutes. The effect of current rate on degradation is therefore small in energy markets according to results of laboratory tests~\cite{wang2014degradation}. We will therefore not consider the current rate in our model. If necessary, a piecewise linear cost curve can be used to model the current rate stress function as a function of the battery's power output.
	
	\subsubsection{Over charge and over discharge} In addition to the cycle depth effect, extreme SoC levels significantly reduce battery life~\cite{vetter2005ageing}. However, over-charging and over-discharging are avoided by enforcing upper and lower limits on the SoC either in the dispatch or by the battery controller.
	
	\subsubsection{Average state of charge} The average SoC level in each cycle has a highly non-linear but slight effect on the cycle aging rate~\cite{ecker2014calendar, millner2010modeling}. Therefore we do not consider this stress factor in the proposed model.
	
	\subsection{The Rainflow Counting Algorithm}\label{Sec:rf}

	\begin{figure}[t]%
		\centering
		\subfloat[An example of SoC profile.]{
			\includegraphics[trim = 10mm 0mm 10mm 0mm, clip, width = .95\columnwidth]{}
			\label{Fig:rf1}%
		}
		\\
		\subfloat[The cycle counting result.]{
			\includegraphics[trim = 10mm 0mm 10mm 0mm, clip, width = .95\columnwidth]{}
			\label{Fig:rf2}%
		}
		\caption{\footnotesize Using the rainflow algorithm to identify battery cycle depths.}%
		\label{Fig:rf}
	\end{figure}

	The rainflow counting algorithm is used extensively in materials stress analysis to count cycles and quantify their cumulative impact.  It has also been applied to battery life assessment ~\cite{xu2016modeling, muenzel2015multi}. Given a SoC profile with a series of local extrema (i.e. points where the current direction changed) $s_0$, $s_1$, $\dotsc$, etc, the rainflow method identifies cycles as~\cite{amzallag1994standardization}:
	\begin{enumerate}
		\item Start from the beginning of the profile (as in Fig.~\ref{Fig:rf1}).
		\item Calculate $\Delta s_1 = |s_0-s_1|$, $\Delta s_2 = |s_1-s_2|$, $\Delta s_3 = |s_2-s_3|$. 
		\item If $\Delta s_2\leq \Delta s_1$ and $\Delta s_2 \leq \Delta s_3$, then a full cycle of depth $\Delta s_2$ associated with $s_1$ and $s_2$ has been identified. Remove $s_1$ and $s_2 $ from the profile, and repeat the identification using points $s_0$, $s_1$, $s_4$, $s_5$...
		\item If a cycle has not been identified, shift the identification forward and repeat the identification using points $s_1$, $s_2$, $s_3$, $s_4$...
		\item The identification is repeated until no more full cycles can be identified throughout the remaining profile.
	\end{enumerate}
	The remainder of the profile is called the rainflow residue and contains only half cycles~\cite{marsh2016review}. A half cycle links each pair of adjoining local extrema in the rainflow residue profile. A half cycle with decreasing SoC is a discharging half cycle, while  a half cycle with increasing SoC is a charging half cycle. For example, the SoC profile shown on Fig.~\ref{Fig:rf2} has two full cycles of depth 10\% and one full cycle of depth 40\%, as well as a discharging half cycle of depth 50\% and charging half cycle of depth 50\%.

	
	The rainflow algorithm does not have an analytical mathematical expression~\cite{benasciutti2005spectral} and cannot be integrated directly within an optimization problem. Nevertheless, several efforts have been made to optimize battery operation by simplifying the rainflow algorithm. Abdulla \emph{et al.}~\cite{abdulla2016optimal} and Tran \emph{et al.}~\cite{tran2013energy} simplify the cycle depth as the BES energy output within each control time interval. Koller \emph{et al.}~\cite{koller2013defining} define  a cycle as the period between battery charging and discharging transitions. These model simplifications enable the incorporation of cycle depth in the optimization of BES operation, but introduce additional errors in the degradation model. He \emph{et al.}~\cite{he2015optimal} decompose the battery degradation model and optimize BES market offers iteratively. This method yields more accurate dispatch results, but is too complicated to be incorporated in an economic dispatch calculation.
	
	We will use the rainflow algorithm as the basis for an ex-post benchmark method for assessing battery cycle life. In this model, the total life lost $L$ from a SoC profile is assumed to be the sum of the life loss from all $I$ number of cycles identified by the rainflow algorithm. If the life loss from a cycle of depth $\delta$ is given by a cycle depth stress function $\Phi(\delta)$ of polynomial form, we have:
	\begin{align}
	L = \textstyle \sum_{i=1}^{I} \Phi(\delta_i)\,.
	\label{Eq:rf1}
	\end{align}
	
	
	\section{Marginal Cost of Battery Cycling}\label{BES:CA}
	
	\begin{figure}
		\centering
		\includegraphics[trim = 5mm 0mm 10mm 0mm, clip, width = .95\columnwidth]{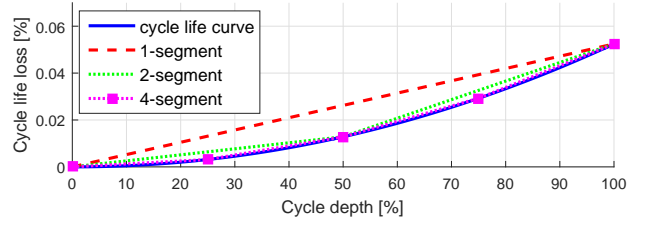}
		\caption{Upper-approximation to the cycle depth aging stress function. 
		}
		\label{Fig:dod}%
	\end{figure}
	
	
	In order to participate fully in electricity markets, owners of batteries must be able to submit offers and bids that reflect their marginal operating cost. As we argued above, this marginal cost curve should reflect the cost of battery degradation caused by each cycle. In order to keep the model simple, and to obtain a cost function similar to those used in existing market dispatch programs, we assume that battery cycle aging only occurs during the discharge stage of a cycle, so that a discharging half cycle causes the same cycle aging as a full cycle of the same depth, while a charging half cycles causes no cycle aging. This is a reasonable assumption because the amounts of energy charged and discharged from a battery are almost identical when assessed on a daily basis.
	
	During a cycle, if the BES is discharged from a starting SoC $e\up{up}$ to an end SoC $e\up{dn}$ and later charged back (or vice-versa), the depth of this cycle is the relative SoC difference $(e\up{up}-e\up{dn})/E\up{rate}$, where $E\up{rate}$ is the energy capacity of the BES.
	Let a battery be discharged from a cycle depth $\delta_{t-1}$ at time interval $t-1$. This battery's cycle depth at time $t$ can be calculated from its output power $g_t$ over time (assuming the time interval duration is one hour):
	\begin{align}
	\delta_t = \frac{1}{\eta\up{dis}E\up{rate}}g_t + \delta_{t-1}\,,
	\label{ES:dod}
	\end{align}
	where $\eta\up{dis}$ is the BES discharge efficiency, and $g_t$ has non-negative values because we ignore charging for now. The incremental aging resulting from this cycle is $\Phi(\delta_t)$, and the marginal cycle aging can be calculated by taking the derivative of $\Phi(\delta_t)$ with respect to $g_t$ and substituting from \eqref{ES:dod}:
	\begin{align}
	\frac{\partial \Phi(\delta_i)}{\partial g_t} = \frac{d\Phi(\delta_i)}{d\delta_i}\frac{\partial \delta_i}{\partial g_t} = \frac{1}{\eta\up{dis}E\up{rate}}\frac{d\Phi(\delta_i)}{d\delta_i}\,,
	\label{ES:inc}
	\end{align}
	
	To define the marginal cost of cycle aging, we prorate the battery cell replacement cost $R$ (\$) to the marginal cycle aging, and construct a piecewise linear upper-approximation function $c$. This function consists of $J$ segments that evenly divide the cycle depth range (from 0 to 100\%) 
	\begin{equation}
	c\up{}(\delta_t) =
	\begin{cases}
	c\up{}_1  & \text{if } \delta_t \in [0, \frac{1}{J}) \\
	\vdots & \\
	c\up{}_j & \text{if } 
	\delta_t \in [\frac{j-1}{J},  \frac{j}{J}) \\
	\vdots & \\
	c\up{}_{J} & \text{if } 
	\delta_t \in [\frac{J-1}{J}, 1]
	\end{cases}\,,
	\label{ES:ca_pl}
	\end{equation}
	where
	\begin{equation}
	c\up{}_j = \frac{R}{\eta\up{dis}E\up{rate}}J\big[\Phi(\frac{j}{J})-\Phi(\frac{j-1}{J})\big]\,,
	\end{equation}
	and $\delta_t$ is the cycle depth of the battery at time $t$. Fig.~\ref{Fig:dod} illustrates the cycle depth stress function and its piecewise linearization with different numbers of segments.


	\section{Optimizing the BES Dispatch}\label{Sec:opt}
	
	Having established a marginal cost function for a BES, we are now able to optimize how it should be dispatched assuming that it acts as a price-taker on the basis of perfect forecasts of the market prices for energy and reserve. A formal description of this optimization requires the definitions of the following parameters:
	\begin{itemize}
		\item $T$: Number of time intervals in the optimization horizon, indexed by $t$
		\item $J$: Number of segments in the cycle aging cost function, indexed by $j$
		\item $M$: Duration of a market dispatch time interval
		\item $S$: Sustainability time requirement for reserve provision
		\item $E\up{0}$: Initial amount of energy stored in the BES
		\item $E\up{final}$: Amount of energy that must be stored at the end of the optimization horizon
		\item $E\up{min}$ and $E\up{max}$: Minimum and maximum energy stored in the BES
		\item $D$, $G$: Discharging and charging power ratings
		\item $c_j$: Marginal aging cost of cycle depth segment $j$
		\item $\overline{e}\up{}_j$: Maximum amount of energy that can be stored in cycle depth segment $j$
		\item $\overline{e}\up{0}_j$: Initial amount of energy of cycle depth segment $j$
		\item $\eta\up{ch}$, $\eta\up{dis}$: Charge and discharge efficiencies 
		\item $\lambda\up{e}_t$, $\lambda\up{q}_t$: Forecasts of the energy and reserve prices at  $t$
	\end{itemize}
	This optimization uses the following decision variables:
	\begin{itemize}
		\item $p\up{ch}_{t,j}$, $p\up{dis}_{t,j}$: Charge and discharge power for cycle depth segment $j$ at time $t$
		\item $e\up{}_{t,j}$: Energy stored in marginal cost segment $j$ at time $t$
		\item $d_t$, $g_t$: Charging and discharging power at time $t$
		\item $d\up{q}_t$, $g\up{q}_t$: BES baseline charging and discharging power at time $t$ for reserve provision
		\item $q_t$: Reserve capacity provided by the BES at time $t$
		\item $v_t$: Operating mode of the BES: if at time $t$ the BES is charging then $v_t=0$; if it is discharging then $v_t=1$. If the BES is idling, this variable can take either value. If some sufficient conditions are satisfied, for example the market clearing prices should not  be negative, the binary variable $v_t$ can be relaxed~\cite{li2016sufficient}
		\item $u_t$: If at time $t$ the BES provides reserve then $u_t=1$, else $u_t=0$
	\end{itemize}
	
	The objective of this optimization is to maximize the operating profit $\Omega$ of the BES. This profit is defined as the difference between the revenues from the energy and reserve markets and the cycle aging cost $C$
	\begin{align}
	\max_{\mathbf{p,g,d,q}} \Omega := \textstyle\sum_{t=1}^{T}M\Big[ \lambda\up{e}_t(g_t-d_t) + \lambda\up{q}_t q_t\Big] - C\up{}\,.
	\label{ES:obj}
	\end{align}
	
	Depending on the discharge power, the depth of discharge during each time interval extends over one or more segments. 
	
	To model the cycle depth in multi-interval operation, we assign a charge power component $p\up{ch}_{t,j}$ and an energy level $e\up{}_{t,j}$ to each cycle depth segment, so that we can track the energy level of each segment independently and identify the current cycle depth. For example, assume we divide the cycle depth of a 1~MWh BES into 10 segments of 0.1~MWh. If a cycle of 10\% depth starts with a discharge, as between $s_2$ and $s_3$ in Fig.~\ref{Fig:rf1}, the BES must have previously undergone a charge event which stored more than 0.1~MWh according to the definition from the rainflow method. Because the marginal cost curve is convex, the BES always discharges from the cheapest (shallowest) available cycle depth segment towards the more expensive (deeper) segments. 
	So that the proposed model provides a close approximation to the rainflow cycle counting algorithm, detailed proofs and a numerical example are included in the appendix.

	The cycle aging cost $C$ is  the sum of the cycle aging costs associated with each segment over the horizon: 
	\begin{equation}
	C\up{} = \textstyle\sum_{t=1}^{T}\sum_{j=1}^{J}Mc\up{}_jp\up{dis}_{t,j}\,.
	\label{ES:cost}
	\end{equation}
	
	This optimization is subject to the following constraints
	\begin{align}
	d_t &= \textstyle\sum_{j=1}^{J}p\up{ch}_{t,j}
	\label{Eq:CP_1}\\
	g_t &= \textstyle\sum_{j=1}^{J}p\up{dis}_{t,j}
	\label{Eq:CP_2}\\
	d_t &\leq D(1-v_t)
	\label{Eq:CP_3}\\
	g_t &\leq Gv_t
	\label{Eq:CP_4}\\
	e\up{}_{t,j}-e\up{}_{t-1,j} &= M(p\up{ch}_{t,j}\eta\up{ch}- p\up{dis}_{t,j}/\eta\up{dis})
	\label{Eq:CE_1}\\
	e\up{}_{t,j} &\leq \overline{e}\up{}_j
	\label{Eq:CE_2}\\
	E\up{min} \leq \textstyle \textstyle\sum_{j=1}^{J}e\up{}_{t,j} &\leq E\up{max}
	\label{Eq:CE_4}\\
	e\up{}_{1,j} &= e\up{0}_j
	\label{Eq:CE_5}\\
	\textstyle\sum_{j=1}^{J}e\up{}_{T,j} &\geq E\up{final}\,,
	\label{Eq:CE_3}
	\end{align}
	
	
	Eq.~\eqref{Eq:CP_1} states that the BES charging power drawn from the grid is the sum of the charging powers associated with each cycle depth segment. Eq.~\eqref{Eq:CP_2} is the equivalent for the discharging power. Eqs.~\eqref{Eq:CP_3}--\eqref{Eq:CP_4} enforce the BES power rating, with the binary variable $v_t$ preventing simultaneous charging and discharging~\cite{go2016assessing}. Eq.~\eqref{Eq:CE_1} tracks the evolution of the energy stored in each cycle depth segment, factoring in the charging and discharging efficiency. Eq.~\eqref{Eq:CE_2} enforces the upper limit on each segment while Eq.~\eqref{Eq:CE_4} enforces the minimum and maximum SoC of the BES. Eq.~\eqref{Eq:CE_5} sets the initial energy level in each cycle depth segment, and the final storage energy level is enforced by Eq.~\eqref{Eq:CE_3}. 
	
	Because the revenues that a BES collects from providing reserve capacity are co-optimized with the revenues from the energy market, it must abide by the  requirements that the North American Electric Reliability Corporation (NERC) imposes on the provision of reserve by energy storage. In particular, NERC requires that a BES must have enough energy stored to sustain its committed reserve capacity and baseline power dispatch for at least one hour~\cite{nerc_reserve}. This requirement is automatically satisfied when market resources are cleared over an hourly interval, as is the case for the ISO New England day-ahead market. If the dispatch interval is shorter than one hour (e.g.for the five-minute ISO New England real-time market), this one-hour sustainability requirement has significant implications on the dispatch of a BES because of the interactions between its power and energy capacities. For example, let us consider a 36~MW BES with 3~MWh of stored energy. If this BES is not scheduled to provide reserve, it can dispatch up to 36~MW of generation for the next 5-minute market period. On the other hand, if it is scheduled to provide 1~MW of reserve, its generation capacity is also constrained by the one hour sustainability requirement, therefore it can only provide up to 2~MW baseline generation for the next 5-minute market period.
	\begin{align}
	0\leq d_t - d\up{q}_t &\leq D(1-u_t)
	\label{Eq:CR_1}\\
	0\leq g_t - g\up{q}_t &\leq G(1-u_t)
	\label{Eq:CR_2}\\
	d\up{q}_t &\leq Du_t
	\label{Eq:CR_3}\\
	g\up{q}_t &\leq Gu_t
	\label{Eq:CR_4}\\
	g\up{q}_t + q_t - d\up{q}_t &\leq Gu_t
	\label{Eq:CR_5}\\
	q_t & \geq \varepsilon u_t
	\label{Eq:CR_6}\\
	S(g\up{q}_t + q_t - d\up{q}_t) &\leq \textstyle\sum_{j=1}^{J}\overline{e}\up{}_j\,,
	\label{Eq:CR_7}
	\end{align}
	Equations \eqref{Eq:CR_1}--\eqref{Eq:CR_7}, enforce the constraints related to the provision of reserve by a BES. In particular, Eq.~\eqref{Eq:CR_7} enforces the one-hour reserve sustainability requirement. Depending on the requirements of the reserve market, the binary variable $u_t$ and constraints \eqref{Eq:CR_1}--\eqref{Eq:CR_7} can be simplified or relaxed.

	The optimization model described above can be used by the BES owner to design bids and offers or self-schedule based on price forecasts. The ISO can also incorporate this model into the market clearing program to better incorporate the aging characteristic of BES. In this case, the cycle aging cost function should be included in the welfare maximization while constraints \eqref{Eq:CP_1}~-~\eqref{Eq:CR_7}  should be added to the market clearing program constraints. A BES owner should include cycle aging parameters $c_j$ and $\overline{e}_j$ in its market offers, and parameters $D$, $G$, $E\up{min}$, $E\up{final}$, $\eta\up{ch}$, $\eta\up{dis}$ for ISO to manage its SoC and its upper/lower charge limits.
	

	\section{Case Study}
	
	The proposed model has been tested using data from ISO New England to demonstrate that it improves the profitability and longevity of a BES participating in this market. All simulations were carried out in GAMS using CPLEX solver~\cite{GAMS}, and the optimization period is 24 hours for all simulations.
	
	\subsection{BES Test Parameters}\label{Sec:CS:BES}
	The BES simulated in this case study has the following parameters:
	\begin{itemize}
		\item Charging and discharging power rating: 20 MW
		\item Energy capacity: 12.5 MWh
		\item Charging and discharging efficiency: 95\%
		\item Maximum state of charge: 95\%
		\item Minimum state of charge: 15\%
		\item Battery cycle life: 3000 cycles at at 80\% cycle depth
		\item Battery shelf life: 10 years
		\item Cell temperature: maintained at 25$^\circ C$
		\item Battery pack replacement cost: 300,000~\$/MWh
		\item $\mathrm{Li(NiMnCo)O_2}$-based 18650 lithium-ion battery cells
	\end{itemize}
	These cells have a near-quadratic cycle depth stress function~\cite{laresgoiti2015modeling}:
	\begin{align}
	\Phi(\delta) = (5.24\text{E-4})\delta\up{2.03}\,.
	\label{Eq:DoD}
	\end{align}
	Fig.~\ref{Fig:dod} shows this stress function along with several possible piecewise linearizations. We assume that all battery cells are identically manufactured, that the battery management system is ideal, and thus that all battery cells in the BES age at the same rate.
	Since the BES dispatch is performed based on perfectly accurate price forecasts, our results provide an upper bound of its profitability in this market.
	
	\begin{figure*}%
		\centering
		\vspace{-5mm}
		\subfloat[Locational marginal price in the day-ahead market (DAM), in an hourly and a 5-minute real-time market (RTM).]{
			\includegraphics[trim = 25mm 0mm 20mm 3mm, clip, width = 2\columnwidth]{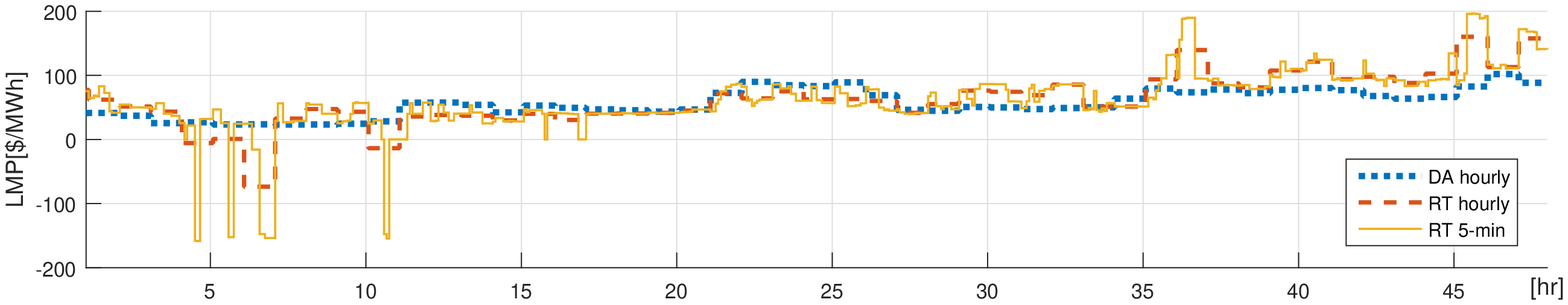}
			\label{Fig:dispatch_price}%
		}
		\\
		\subfloat[BES SoC profile in RTM with 5-minute settlement.]{
			\includegraphics[trim = 25mm 0mm 20mm 0mm, clip, width = 2\columnwidth]{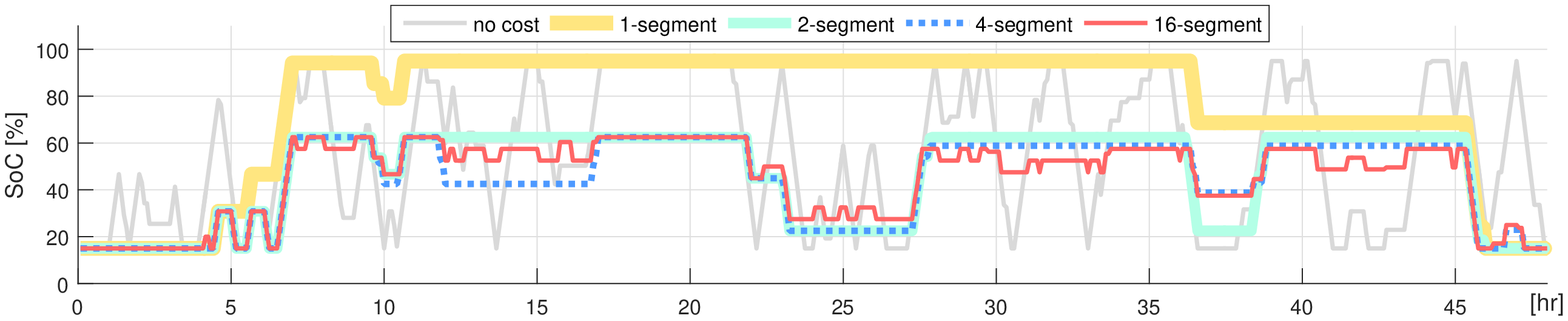}
			\label{Fig:dispatch_soc}%
		}
		\\
		\subfloat[BES output power profile in RTM with 5-minute settlement.]{
			\includegraphics[trim = 25mm 0mm 20mm 0mm, clip, width = 2\columnwidth]{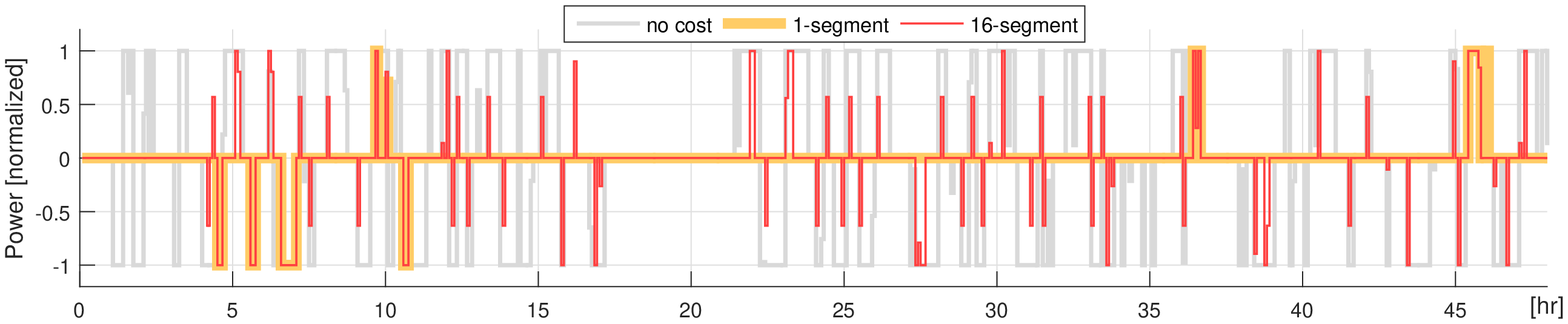}
			\label{Fig:dispatch_p}%
		}
		\caption{BES dispatch for different cycle aging cost models (ISO New England SE-MASS Zone, Jan 5th \& 6th, 2015).}%
		\label{Fig:comp_block}
	\end{figure*}
	
	\subsection{Market Data}\label{Sec:CS:DS}
	
	BES dispatch simulations were performed using zonal price for Southeast Massachusetts (SE-MASS) region of ISO New England market price data for 2015~\cite{iso_express} because energy storage has the highest profit potential in this price zone~\cite{yury2016ensuring}. Three market scenarios were simulated:
	\begin{itemize}
		\item \emph{Day-Ahead Market (DAM)}: Generations and demands are settled using hourly day-ahead prices in this energy market. The DAM does not clear operating reserve capacities. DAM is a purely financial market, and is used in this study to demonstrate the BES dispatch under stable energy prices.
		\item \emph{Real-Time Market (RTM) with 1-hour settlement period}: The real-time energy market clears every five minutes and generates  5-minute real-time energy and reserve prices. Generations, demands, and reserves are settled hourly using an average of these 5-minute prices. The reserve sustainability requirement is one hour.
		\item \emph{RTM with 5-minute settlement periods}: ISO New England plans to launch the 5-minute subhourly settlement on March 1,~2017~\cite{isone_rte}. The reserve sustainability requirement remains one hour.
	\end{itemize}
	Fig.~\ref{Fig:dispatch_price} compares the energy prices in these different markets and shows that the 5-minute real-time prices fluctuate the most, while the day-ahead prices are more stable than real-time prices.

	\subsection{Accuracy of the Predictive Aging Model}
	Fig.~\ref{Fig:dispatch_soc} and \ref{Fig:dispatch_p} compare the BES dispatches for piecewise-linear cycle aging cost functions with different numbers of cycle depth segments. A cost curve with more segments is a closer approximation of the actual cycle aging function. The price signal for these examples is the 5-minute RTM price curve shown in Fig.~\ref{Fig:dispatch_price}. Fig.~\ref{Fig:dispatch_soc} shows the SoC profile while Fig.~\ref{Fig:dispatch_p} shows the corresponding output power profile, where positive values correspond to discharging periods, and negative values to charging periods.  
	
	The gray curve in Fig.~\ref{Fig:comp_block} shows the dispatch of the BES assuming zero operating cost. This is the most aggressive dispatch, and the BES assigns full power to arbitrage as long as there are price fluctuations, regardless of the magnitude of the price differences. Fig.~\ref{Fig:dispatch_p} shows that the BES frequently switches between charging and discharging, and Fig.~\ref{Fig:dispatch_soc} that it ramps aggressively. This dispatch maximizes the market revenue for the BES, but not the maximum lifetime profit, because the arbitrage decisions ignore the cost of cycle aging. We will show in Section~\ref{Sec:CS_MP} that this dispatch actually results in negative profits for all market scenarios. 
	
	\begin{table*}[t]
		\centering
		\vspace{-7mm}
		\caption{Dispatch of a 20MW~/~12.5MWH BES in ISO-NE Energy Markets  (full-year 2015).}
		\begin{tabular}{l || c c c || c c c || c c c}
			\hline
			\hline
			Market & \multicolumn{3}{c||}{DAM} & \multicolumn{3}{c||}{RTM with hourly settlement} & \multicolumn{3}{c}{RTM with 5-minute settlement} \Tstrut\Bstrut\\
			\hline
			Cycle aging cost model            & no cost & 1-seg.  & 16-seg. & no cost & 1-seg.  & 16-seg. & no cost & 1-seg.  & 16-seg. \Tstrut\Bstrut\\
			\hline
			Annual market revenue [k\$]   & 138.8 & 0 & 21.3 & 382.5 & 197.5 & 212.5 & 789.3 & 303.8 & 372.3  \Tstrut\Bstrut\\
			\hline
			Revenue from reserve [\%]        & \multicolumn{3}{c||}{No price for reserve in DAM}& 29.6 & 74.1 & 73.6 & 13.8 & 34.9 & 29.8  \Tstrut\Bstrut\\
			\hline
			Annual life loss from cycling [\%] & 24.4 & 0 & 0.3 & 43.6 & 1.0 & 1.1 & 77.0 & 2.2  & 2.6 \Tstrut\Bstrut\\
			\hline
			Annual prorated cycle aging cost [k\$]        & 913.8 & 0 & 11.3 & 1626.3 & 36.3 & 38.8 & 2887.5 & 81.3 & 96.3 \Tstrut\Bstrut\\
			\hline
			Annual prorated profit [k\$]     & -775.0 & 0 &10 & -1243.8 & 161.3 & 173.8 & -2101.3 & 222.5 & 276.3 \Tstrut\Bstrut\\
			\hline
			Profit from reserve [\%]     & \multicolumn{3}{c||}{No price for reserve in DAM} & - & 90.7 & 90.0 & - & 47.7 & 40.2 \Tstrut\Bstrut\\
			\hline
			Battery life expectancy [year]  & 2.9 & 10.0 & 9.7 & 1.9 & 9.1 & 9.1 & 1.1 & 8.2 & 8.0 \Tstrut\Bstrut\\
			\hline
			\hline
		\end{tabular}
		\label{tab:bes_dispatch}
	\end{table*}

	The yellow curve in Fig.~\ref{Fig:comp_block} illustrates the dispatch of the BES when the cycle aging cost curve is approximated by a single cycle depth segment. In this case, the marginal cost of cycle aging is constant and, as shown in Fig.~\ref{Fig:dod}, it overestimates the marginal cost of aging over a wide range of cycle depths. Therefore, this dispatch yields the most conservative arbitrage response, and the BES remains idle unless price deviations are very large, as demonstrated in Fig.~\ref{Fig:dispatch_p}. Consequently, the BES collects the smallest market revenues, but the BES never loses money from market dispatch because the actual cycle aging is always smaller than the value predicted by the model.
	
	As the number of segments increases, the BES dispatch becomes more sensitive to the magnitude of the price fluctuations, and a tighter correlation can be observed between the market price in Fig.~\ref{Fig:dispatch_price} and the BES SoC in Fig.~\ref{Fig:dispatch_soc}. The red curve shows the dispatch of the BES using a 16-segment linearization of the cycle aging cost curve. When small price fluctuation occurs, the BES only dispatches at a fraction of its power rating, even though it has sufficient energy capacity. This ensures that the marginal cost of cycle aging does not exceed the marginal market arbitrage income.

	Besides considering the impact of the piecewise linearization on the BES dispatch, it is also important to compare the cycle aging cost used by the predictive model incorporated in the dispatch calculation with an ex-post calculation of this cost using the benchmark rainflow-counting algorithm. Using the $\hat{e}_{t,j}$ calculated using the optimal dispatch model \eqref{ES:obj}, we generate a percentage SoC series:
	\begin{align}
	\sigma_t = \textstyle \sum_{j=1}^J {\hat{e}_{t,j}}/{E\up{rate}}\,,
	\end{align}
	This SoC series is fed into the rainflow method as described in Section~\ref{Sec:rf}, and the cycle life loss $L$ is calculated as in \eqref{Eq:rf1} with the cycle stress function \eqref{Eq:DoD}. The relative error $\epsilon$ on the cycle aging cost is calculated as:
	\begin{align}
	\epsilon = {|\hat{C} - RL|}/({RL})\,,
	\end{align}
	where $\hat{C}$ is the cycle aging cost from \eqref{ES:cost}. Fig.~\ref{Fig:cost_err} shows the difference between the predicted and ex-post calculations for the simulations based on the RTM with a 5-minute settlement. As the number of segments increases to 16, the error becomes negligible.

	\begin{figure}
		\centering
		\includegraphics[trim = 5mm 0mm 10mm 0mm, clip, width = .95\columnwidth]{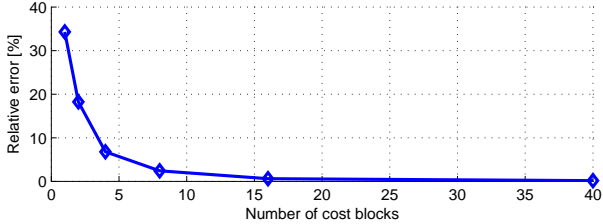}
		\caption{Difference between the cycle aging cost calculated using the predictive model and an ex-post calculation using the benchmark rainflow method for a full-year 5-minute RTM dispatch simulation.}
		\label{Fig:cost_err}%
	\end{figure}
	
	\subsection{BES Market Profitability Analysis}~\label{Sec:CS_MP}
	
	Table~\ref{tab:bes_dispatch} summarizes the economics of BES operation under the three markets described in Section \ref{Sec:CS:DS} and for three cycle aging cost models: \emph{no operating cost}; \emph{single segment cycle aging cost}; and \emph{16-segment cycle aging cost}. The market revenue, profit, and battery life expectancy calculations are based on dispatch simulations using market data spanning all of 2015. On the fifth row, the life loss due to market dispatch is calculated using the benchmark cycle life loss model of Eqs. \eqref{Eq:rf1}, and \eqref{Eq:DoD}. In the sixth row, we calculate the cycle aging cost by prorating the battery cell replacement cost to the dispatch life loss. In the seventh row, the cost of cycle aging is subtracted from the market revenue to calculate the operating profit. In the last row, we estimate the battery cell life expectancy assuming the BES repeats the same operating pattern in future years. The life estimation $L\up{exp}$  includes shelf (calendar) aging and cycle aging
	\begin{align}
	L\up{exp} = ({100\%})/({\Delta L\up{cal} + \Delta L\up{cycle}})\,,
	\end{align}
	where $\Delta L\up{cal}$ is the 10\% annual self life loss as listed in Section~\ref{Sec:CS:BES}, and $\Delta L\up{cycle}$ is the annual life loss due to cycle aging as shown in the sixth row in Table~\ref{tab:bes_dispatch}.

	The 16-segment model generates the largest profit in all market scenarios. Compared to the 16-segment model, the no cost model results in a more aggressive operation of the BES, while the 1-segment model is more conservative. Because the no-cost model encourages arbitrage in response to all price differences, it results in a very large negative profit and a very short battery life expectancy in all market scenarios. The 1-segment model only arbitrages during large price deviations. In particular, the BES is never dispatched in the day-ahead because these market prices are very stable. 
	
	The BES achieves the largest profits in the 5-minute RTM because this market has the largest price fluctuations. The revenue from reserve is lower in the 5-minute RTM than the hourly RTM. This result shows that the proposed approach is able to switch the focus of BES operation from reserve to arbitrage when market price fluctuations become high. In the RTM, the BES collects a substantial portion of its profits from the provision of reserve, especially in the hourly RTM. A BES is more flexible than generators at providing reserves because it does not have a minimum stable generation, it can start immediately, and can remain idle until called. Therefore, the provision of reserve causes no cycle aging. In the hourly RTM, the provision reserve represents about 74\% of the market revenue and 90\% of the prorated profits for this BES.

	\section{Conclusion}
	
	This paper proposes a method for incorporating the cost of battery cycle aging in economic dispatch, market clearing or the development of bids and offers . This approach takes advantage of the flexibility that a battery can provide to the power system while ensuring that its operation remains profitable in a market environment. The cycle aging model closely approximates the actual electrochemical battery cycle aging mechanism, while being simple enough to be incorporated into market models such as economic dispatch. Based on simulations performed using a full year of actual market price data, we demonstrated the effectiveness and accuracy of the proposed model. These simulation results show that modeling battery degradation using the proposed model significantly improves the actual BES profitability and life expectancy.

	\appendix
	
	In this appendix we prove that the proposed piecewise linear model of the battery cycle aging cost is a close approximation of the benchmark rainflow-based battery cycle aging model, and that the accuracy of the model increases with the number of linearization segments. \emph{The proposed model produces the same aging cost as to the benchmark aging model for the same battery operation profile with an adequate number of linearization segments.} To prove this, we first explicitly characterize the cycle aging cost result calculated using the proposed model (Theorem~1). We then show that this cost approaches the benchmark result when the number of linearization segments approaches infinity (Theorem~2).

	We consider the operation of a battery over the period $\mathcal{T} = \{1,2,\dotsc,T\}$, the physical battery operation constraints are ($\forall t\in \mathcal{T}$)
	\begin{align}
	d_t &\leq D(1-v_t)
	\label{Eq:App_1}\\
	g_t &\leq Gv_t
	\label{Eq:App_2}\\
	e_t - e_{t-1} &= M(d_t\eta\up{ch}-g_t/\eta\up{dis})
	\label{Eq:App_3}
	\end{align}
	We denote $\mathbf{d}=\{d_1, d_2, \dotsc, d_T\}$ as the set of all battery charge powers, and  $\mathbf{g}=\{g_1, g_2, \dotsc, g_T\}$ as the set of all discharge powers. Hence, a set in the form of  $(\mathbf{d}, \mathbf{g})$ is sufficient to describe  the dispatch of a battery over $\mathcal{T}$. Let $\mathcal{P}(e_0)$ denote the set of all feasible battery dispatches that satisfy the physical battery operation constraints \eqref{Eq:App_1}--\eqref{Eq:App_3} given an battery initial energy level $e_0$. 
	
	Since we are only interested in characterizing the aging cost calculated by the proposed model for a certain battery operation profile, we will regard the battery operation profile as known variables in this proof. It is easy to see that once the dispatch profile $(\mathbf{d}, \mathbf{g})$ is determined, any battery dispatch problem that involves the proposed model with a linearization segment set $\mathcal{J}=\{1,2,\dotsc, J\}$, such as the one formulated in Section~\ref{Sec:opt}, can be reduced to the following problem if we neglect any operation prior to the operation interval $\mathcal{T}$
	\begin{align}
	\hat{\mathbf{p}} \in \mathrm{arg}&\min_{\mathbf{p}\in \mathbb{R}^+} \textstyle \sum_{t=1}^{T}\sum_{j=1}^{J}Mc\up{}_jp\up{dis}_{t,j}\,, 
	\label{Eq:Pro_obj}\\
	&\text{s.t. }\nonumber\\
	&d_t = \textstyle\sum_{j=1}^{J}p\up{ch}_{t,j}
	\label{Eq:Pro_C1}\\
	&g_t = \textstyle\sum_{j=1}^{J}p\up{dis}_{t,j}
	\label{Eq:Pro_C2}\\
	&e\up{}_{t,j}-e\up{}_{t-1,j} = M(p\up{ch}_{t,j}\eta\up{ch}- p\up{dis}_{t,j}/\eta\up{dis})
	\label{Eq:Pro_C3}\\
	&0\leq e\up{}_{t,j} \leq \overline{e}\up{}_j
	\label{Eq:Pro_C4}\\
	&\textstyle \sum_{j=1}^{J} e\up{}_{0,j} = e_0
	\label{Eq:Pro_C5}
	\end{align}
	where $(\mathbf{d}, \mathbf{g})\in \mathcal{P}(e_0)$ is a feasible battery dispatch set, and $\mathbf{p} = \{p\up{ch}_{t,j}, p\up{dis}_{t,j}|t\in \mathcal{T}, j\in \mathcal{J}\}$ denotes a set of the battery charge and discharge powers for all segments during all dispatch intervals. Although the objective is still cost minimization,  the problem in \eqref{Eq:Pro_obj}--\eqref{Eq:Pro_C5} does not optimize battery dispatch, instead it simulates cycle operations $\mathbf{p}$ and calculates the cycle aging cost with respect to a dispatch profile $(\mathbf{d}, \mathbf{g})$. Hence, the evaluation criteria to this problem is its accuracy compared to the benchmark aging cost model.
	
	Let $\mathbf{c}=\{c_j|j\in \mathcal{J}\}$ denote a set of piecewise linear battery aging cost segments derived as in equation \eqref{ES:ca_pl}, so that $c_j$ is associated with the cycle depth range $[(j-1)/J, j/J)$ and $J=|\mathcal{J}|$ is the number of segments. We say that a battery has a \emph{convex} aging cost curve (i.e., non-decreasing marginal cycle aging cost) if a \emph{shallower} cycle depth segment (i.e., indexed with smaller $j$) is associated with a \emph{cheaper} marginal aging cost such that $c_1\leq c_2 \leq \dotsc \leq c_J$, and let $\mathcal{C}$ denote the set of all convex battery aging cost linearizations.

	\begin{theorem}
		Let $\hat{\mathbf{p}} = \{\hat{p}\up{ch}_{t,j}, \hat{p}\up{dis}_{t,j}|t\in \mathcal{T}, j\in \mathcal{J}\}$ and
		\begin{align}
		\hat{p}\up{ch}_{t,j} &= \textstyle \min\big[d_t-\sum_{\zeta=1}^{j-1}\hat{p}\up{ch}_{t,\zeta}, \;(\overline{e}_j- \hat{e}\up{}_{t-1,j})/(\eta\up{ch}M)\big] 
		\label{Eq:the1}\\
		\hat{p}\up{dis}_{t,j} &= \textstyle \min\big[g_t-\sum_{\zeta=1}^{j-1}\hat{p}\up{dis}_{t,\zeta}, \;\eta\up{dis}\hat{e}\up{}_{t-1,j}/M\big] 
		\label{Eq:the2}\\
		\hat{e}\up{}_{0,j} &= \textstyle \min\big[\overline{e}_j, \max(0, e_0-\sum_{\zeta=1}^{j-1}\hat{e}\up{}_{0,\zeta})\big]
		\label{Eq:the3}\\
		\hat{e}\up{}_{t,j} &= \hat{e}\up{}_{t-1,j} + M(p\up{ch}_{t,j}\eta\up{ch}- p\up{dis}_{t,j}/\eta\up{dis})\,.
		\label{Eq:the4}
		\end{align}
		Then $\hat{\mathbf{p}}$ is a minimizer of the problem \eqref{Eq:Pro_obj}--\eqref{Eq:Pro_C5} as long as the battery dispatch is feasible and the cycle aging cost curve is convex, i.e.,
		\begin{align}
		&\hat{\mathbf{p}} \in \mathrm{arg}\min_{\mathbf{p}\in \mathbb{R}^+} \text{ \eqref{Eq:Pro_obj}--\eqref{Eq:Pro_C5}}\,,\nonumber\\
		&\text{$\forall (\mathbf{d}, \mathbf{g})\in \mathcal{P}(e_0)$, $e_0 \in [E\up{min}, E\up{max}]$, $\mathbf{c} \in \mathcal{C}$}.
		\end{align}
	\end{theorem}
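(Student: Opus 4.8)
The claim is that the greedy assignment $\hat{\mathbf{p}}$ defined by \eqref{Eq:the1}--\eqref{Eq:the4} solves the cost-minimization problem \eqref{Eq:Pro_obj}--\eqref{Eq:Pro_C5}, for every feasible dispatch $(\mathbf{d},\mathbf{g})$, every admissible initial level $e_0$, and every convex cost vector $\mathbf{c}$. My plan is to treat this as a greedy-optimality argument and prove it by an exchange (swapping) argument, exploiting the two structural features that make greedy correct here: the cost is \emph{separable} across segments and time (it is a sum of $Mc_j\,p\up{dis}_{t,j}$ terms), and the marginal costs are \emph{sorted} ($c_1\le c_2\le\dotsc\le c_J$). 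The first thing I would do is verify feasibility: check that the $\hat{\mathbf{p}}$ produced by \eqref{Eq:the1}--\eqref{Eq:the4} actually satisfies every constraint \eqref{Eq:Pro_C1}--\eqref{Eq:Pro_C5}. The $\min[\cdot,\cdot]$ in \eqref{Eq:the2} is constructed precisely so that the energy floor $0\le\hat{e}_{t,j}$ and the discharge-total constraint \eqref{Eq:Pro_C2} both hold, and symmetrically \eqref{Eq:the1} respects the ceiling $\overline e_j$ and \eqref{Eq:Pro_C1}; I would confirm this and also check that the total discharge $g_t$ can always be accommodated across the $J$ segments given that $(\mathbf{d},\mathbf{g})\in\mathcal P(e_0)$, which is what guarantees the greedy fill never ``runs out'' of capacity before reaching $g_t$ or $d_t$.

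Once feasibility is in hand, the core is an exchange argument. Let $\mathbf{p}^\star$ be any optimal (feasible) solution and suppose it differs from the greedy $\hat{\mathbf{p}}$. Because the discharge cost rewards putting energy into the cheapest (shallowest) segments first, I would show that whenever $\mathbf{p}^\star$ discharges from a deeper segment $j'$ while a shallower segment $j<j'$ still has discharge capacity available at the same time $t$ (i.e. stored energy that could have been drawn instead), one can move an infinitesimal amount of discharge from $j'$ to $j$. Since $c_j\le c_{j'}$, this reallocation does not increase the objective, and it moves $\mathbf{p}^\star$ strictly closer to the greedy profile. The charging assignment \eqref{Eq:the1} plays the dual role: filling shallow segments first on charge guarantees that shallow segments are the ones holding energy and hence are the first available to discharge, which is exactly the configuration greedy exploits. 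I would make the induction rigorous by ordering the $(t,j)$ decisions lexicographically (earliest time first, then shallowest segment first) and arguing that the first index at which $\mathbf{p}^\star$ and $\hat{\mathbf p}$ disagree can be eliminated by such an exchange without raising the cost, so that after finitely many exchanges any optimum can be converted to $\hat{\mathbf p}$ at no cost increase.

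The subtlety I expect to fight with — the main obstacle — is the \emph{coupling across time} introduced by the energy dynamics \eqref{Eq:the4}/\eqref{Eq:Pro_C3}. Unlike a static assignment problem, reallocating discharge among segments at time $t$ changes the stored levels $\hat{e}_{t,j}$, which alters what is feasible at time $t+1$; so a single local swap is not self-contained and I must check that the exchange can be \emph{propagated forward} without violating feasibility later (or, alternatively, that a compensating swap is available downstream). The clean way to handle this is to argue that because charging also follows the shallow-first rule, the total energy in any prefix of segments $\{1,\dotsc,j\}$ is the same under $\mathbf{p}^\star$ and $\hat{\mathbf p}$ at every time once the discharge totals $g_t$ and charge totals $d_t$ are fixed by $(\mathbf{d},\mathbf{g})$; what the segment-level choice controls is only \emph{how} that fixed aggregate is distributed, and the greedy distribution stochastically dominates (keeps energy in the shallowest segments), which is preserved under the dynamics. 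Establishing this ``prefix-sum invariant'' and showing it is maintained by the forward recursion is the step I would spend the most care on; with it, the exchange argument localizes cleanly and the convexity $c_1\le\dotsc\le c_J$ delivers optimality.
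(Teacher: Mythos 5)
Your overall architecture --- check feasibility of the greedy assignment, then convert any optimum into it by cost-non-increasing exchanges that shift discharge from deeper to shallower segments --- is the rigorous version of what the paper merely asserts. The paper's proof consists of unrolling the recursion \eqref{Eq:the1}--\eqref{Eq:the4} as a ``shallowest-segment-first'' priority rule and then stating, in a single sentence, that because the objective contains no price term and $c_1\leq\dotsb\leq c_J$, any minimizer must give cheaper segments higher priority; the temporal coupling through \eqref{Eq:Pro_C3} that you single out as the real difficulty is not addressed there at all. So you have correctly located the step that actually needs an argument.

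However, the invariant you propose to close that step does not hold. The prefix sums $\sum_{\zeta\leq j}e_{t,\zeta}$ are \emph{not} equal across feasible policies for $j<J$ (only the full sum $\sum_{j=1}^{J}e_{t,j}$ is pinned down by $e_0$, $\mathbf{d}$, $\mathbf{g}$), and the state-dominance relation $\sum_{\zeta\leq j}\hat{e}_{t,\zeta}\geq\sum_{\zeta\leq j}e^{\star}_{t,\zeta}$ is preserved by shallow-first charging but \emph{destroyed} by shallow-first discharging: with unit segment capacities and $e_0=2$, greedy holds state $(1,1,0,\dotsc)$ while a competitor holds $(1,0,1,\dotsc)$; after one unit of discharge greedy is at $(0,1,0,\dotsc)$ but the competitor, discharging from segment $3$, is at $(1,0,0,\dotsc)$, so its prefix sum at $j=1$ now strictly exceeds greedy's. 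The invariant that does survive the dynamics concerns \emph{cumulative discharge throughput} rather than the state: for every $j$, the segments $\{1,\dotsc,j\}$ act as a sub-battery of capacity $\sum_{\zeta\leq j}\overline{e}_\zeta$, and the shallow-first rule maximizes, at every time $t$, the total energy discharged through that sub-battery up to $t$ (provable by induction on the pair ``cumulative discharge'' and ``cumulative discharge plus current content,'' the latter being monotone under charging and conserved under discharging). Writing the cost by Abel summation with $\Theta_j=\sum_{t}Mp\up{dis}_{t,j}$,
\begin{align}
\sum_{j=1}^{J}c_j\Theta_j \;=\; c_J\sum_{j=1}^{J}\Theta_j \;-\; \sum_{j=1}^{J-1}(c_{j+1}-c_j)\sum_{\zeta=1}^{j}\Theta_\zeta\,,
\end{align}
where $\sum_{j}\Theta_j$ is fixed by $\mathbf{g}$ and $c_{j+1}-c_j\geq 0$, maximality of every prefix throughput $\sum_{\zeta\leq j}\Theta_\zeta$ immediately gives minimality of the cost. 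With that substitution your plan goes through; as written, the dominance argument fails at exactly the point you flagged as needing the most care.
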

	
	\begin{proof}
		Equations \eqref{Eq:the1}--\eqref{Eq:the4} describe a battery operating policy over the proposed piecewise linear model. To calculate this policy, we start from \eqref{Eq:the3} which calculates the initial segment energy level from the battery initial SoC $e_0$. \eqref{Eq:the3} is evaluated in the order of $j=0,1,2,3,\dotsc, J$ such as (note that $\sum_{\zeta=1}^{0}\hat{e}\up{}_{0,\zeta} = 0$)
		\begin{align}
		\hat{e}\up{}_{0,1} &= \textstyle \min\big[\overline{e}_1, \max(0, e_0)\big]\nonumber\\
		\hat{e}\up{}_{0,2} &= \textstyle \min\big[\overline{e}_2, \max(0, e_0-\hat{e}\up{}_{0,1})\big]\nonumber\\
		\hat{e}\up{}_{0,3} &= \textstyle \min\big[\overline{e}_3, \max(0, e_0-\hat{e}\up{}_{0,1}-\hat{e}\up{}_{0,2})\big]\nonumber\\
		& \dots\,,\nonumber
		\end{align}
		so that energy in $e_0$ is first assigned to $\hat{e}_{0,1}$ which corresponds to the shallowest cycle depth range $[0, 1/J]$, the remaining energy is then assigned to the second shallowest segment $\hat{e}_{0,2}$, and the procedure repeats until all the energy in $e_0$ has been assigned. 
		
		We then calculate all battery segment charge power at $t=1$ in the order of $j=0,1,2,3,\dotsc, J$ as
		\begin{align}
		\hat{p}\up{ch}_{1,1} &= \textstyle \min\big[d_t, \;(\overline{e}_1- \hat{e}\up{}_{0,1})/(\eta\up{ch}M)\big] \nonumber\\
		\hat{p}\up{ch}_{1,2} &= \textstyle \min\big[d_t-\hat{p}\up{ch}_{1,1}, \;(\overline{e}_2 - \hat{e}\up{}_{0,2})/(\eta\up{ch}M)\big] \nonumber\\
		\hat{p}\up{ch}_{1,3} &= \textstyle \min\big[d_t-\hat{p}\up{ch}_{1,1}-\hat{p}\up{ch}_{1,2}, \;(\overline{e}_3- \hat{e}\up{}_{0,3})/(\eta\up{ch}M)\big]\nonumber\\
		&\dots\,,\nonumber
		\end{align}
		and the procedure is similar for segment discharge power $\hat{p}\up{dis}_{1,j}$. We calculate the segment energy level $\hat{e}_{1,j}$ at the end of $t=1$ using \eqref{Eq:the4}, and move the calculation to $t=2$. This procedure repeats until all values in $\hat{\mathbf{p}}$ have been calculated. Therefore in this policy, the battery always prioritizes energy in shallower segments for charge or discharge dispatch. For example, if the battery is required to discharge a certain amount of energy, it will first dispatch segment 1, then the remaining discharge requirement (if any) is dispatched from segment 2, then segment 3, etc.

		
		Given this policy, this theorem stands if the battery cycle aging cost curve $\mathbf{c}$ is convex, i.e., $\mathbf{c}\in \mathcal{C}$, which means a shallower segment is associated with a cheaper marginal operating cost. Since the objective function \eqref{Eq:Pro_obj} is to minimize the battery aging cost and the problem involves no market price, then a minimizer for the problem \eqref{Eq:Pro_obj}--\eqref{Eq:Pro_C5} will give a cheaper segment a higher operation priority, which is equivalent to the policy described in \eqref{Eq:the1}--\eqref{Eq:the4}.
	\end{proof}
	
	Following Theorem~1, the cycle aging cost calculated by the proposed piecewise linear model $C\up{pwl}$ for a battery dispatch profile $(\mathbf{d}, \mathbf{g})$ can be written as a function of this profile and the linearization cost set as 
	\begin{align}
	C\up{pwl}(\mathbf{c}, \mathbf{d}, \mathbf{g}) = \textstyle\sum_{t=1}^{T}\sum_{j=1}^{J}Mc\up{}_j\hat{p}\up{dis}_{t,j}\,,
	\label{Eq:the_cost}
	\end{align}
	where $\hat{\mathbf{p}}$ is calculated as in \eqref{Eq:the1}--\eqref{Eq:the4}.

	Let $\Phi(\delta)$ be a convex battery cycle aging stress function, and  $\mathbf{c}(\Phi)$ be a set of piecewise linearizations of $\Phi(\delta)$ determined using the method described in equation \eqref{ES:ca_pl}. Let $|\mathbf{c}(\Phi)|$ denote the cardinality of $\mathbf{c}(\Phi)$, i.e. the number of segments in this piecewise linearization.  
	
	For a feasible battery dispatch profile $(\mathbf{d}, \mathbf{g}) \in \mathcal{P}(e_0)$, let $\Delta$ be the set of all full cycles identified from this operation profile using the rainflow method, $\Delta\up{dis}$ for all discharge half cycles, and $\Delta\up{ch}$ for all charge half cycles. The benchmark cycle aging cost $C\up{ben}$ resulting from $(\mathbf{d}, \mathbf{g})$ can be written as a function of the profile and the cycle aging function $\Phi$ (recall that a full cycle has symmetric depths for charge and discharge)
	\begin{align}
	C\up{ben}(\Phi, \mathbf{d}, \mathbf{g}) = \textstyle R\sum_{i=1}^{|\Delta|}\Phi(\delta_i) + R\sum_{i=1}^{|\Delta\up{dis}|}\Phi(\delta\up{dis}_i)\,.
	\end{align}
	
	\begin{theorem}
		When the number of linearization segments approaches infinity, the proposed piecewise linear cost model yields the same result as the benchmark rainflow-based cost model:
		\begin{align}
		\lim_{|\mathbf{c}(\Phi)|\to \infty} C\up{pwl}\big(\mathbf{c}(\Phi), \mathbf{d}, \mathbf{g}\big) =  C\up{ben}\big(\Phi, \mathbf{d}, \mathbf{g}\big)\,.
		\label{Eq:the_2}
		\end{align}
	\end{theorem}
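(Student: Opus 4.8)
The plan is to combine Theorem~1 with an induction on the rainflow extraction, and then pass to the limit using only the continuity of $\Phi$. By Theorem~1 the cost $C\up{pwl}(\mathbf{c}(\Phi),\mathbf{d},\mathbf{g})$ is the one produced by the shallowest-first charge/discharge policy \eqref{Eq:the1}--\eqref{Eq:the4}, so the whole argument may treat the dispatch $(\mathbf{d},\mathbf{g})$ and its finite sequence of SoC extrema as fixed. I would first record the elementary fact that fully discharging segment $j$ costs exactly $R[\Phi(j/J)-\Phi((j-1)/J)]$, so that a monotone discharge of depth $\delta$ from a peak, which empties the shallowest segments up to depth $\delta$, costs the piecewise-linear interpolant $R\hat\Phi_J(\delta)$ of $R\Phi$ on the grid $\{j/J\}$ (telescoping to $R\Phi(\delta)$ whenever $\delta$ is a grid point). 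The goal is then to show that $C\up{pwl}=R\sum_i\hat\Phi_J(\delta_i)+R\sum_i\hat\Phi_J(\delta\up{dis}_i)$ over exactly the full cycles and discharge half cycles returned by the rainflow method, after which the limit is routine.

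The core step, and the main obstacle, is an additivity lemma: inserting an innermost oscillation $s_1,s_2$ of depth $\delta$ satisfying the rainflow inequality $\Delta s_2\le\Delta s_1$, $\Delta s_2\le\Delta s_3$ increases the shallowest-first discharge cost by exactly $R\hat\Phi_J(\delta)$, independently of the surrounding excursion. I would prove this by tracking which segments the policy touches: because the marginal cost is convex, discharging empties the cheapest (shallowest) available segments first and charging refills them first, while the rainflow inequality guarantees that the oscillation's amplitude is no larger than the adjacent moves, so the segments it borrows and returns are precisely the shallowest $\delta$-worth that the surrounding excursion would otherwise have discharged. Consequently the oscillation contributes one extra round-trip of exactly those shallowest segments --- cost $R\hat\Phi_J(\delta)$ --- while leaving the discharge accounting of the remaining profile unchanged. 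Applying this lemma repeatedly, in the same order in which the rainflow algorithm removes full cycles, peels off each full cycle at its discretized cost and reduces the profile to its rainflow residue; the residue is a succession of monotone half cycles, each discharge half cycle contributing $R\hat\Phi_J(\delta\up{dis})$ and each charge half cycle contributing nothing, matching the benchmark assumption that charging causes no aging. Establishing this invariance cleanly --- in particular for oscillations nested at arbitrary depth whose amplitude straddles a segment boundary --- is where the real work lies, since the intermediate segment-fill state is history dependent and non-monotone.

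Once the identity $C\up{pwl}=R\sum_i\hat\Phi_J(\delta_i)+R\sum_i\hat\Phi_J(\delta\up{dis}_i)$ is in hand, the limit is immediate. The function $\hat\Phi_J$ is the piecewise-linear interpolant of $\Phi$ at the nodes $\{j/J\}$, so for convex $\Phi$ it is an upper approximation, $\hat\Phi_J\ge\Phi$, and its error is controlled by the modulus of continuity of $\Phi$ on the compact interval $[0,1]$, giving $\sup_{\delta\in[0,1]}|\hat\Phi_J(\delta)-\Phi(\delta)|\to 0$ as $J=|\mathbf{c}(\Phi)|\to\infty$. Since the rainflow decomposition of the fixed dispatch yields a fixed finite number $|\Delta|+|\Delta\up{dis}|$ of cycles, I would conclude $|C\up{pwl}-C\up{ben}|\le R\,(|\Delta|+|\Delta\up{dis}|)\sup_\delta|\hat\Phi_J(\delta)-\Phi(\delta)|\to 0$, which is \eqref{Eq:the_2}. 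I would also note that the same bound shows the approximation error decreasing monotonically and from above, consistent with the numerical behaviour reported in the case study.
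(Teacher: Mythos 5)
Your proposal is correct in substance but organizes the argument differently from the paper. The paper passes to the continuum first: it rewrites $C\up{pwl}$ as $\sum_j c_j\Theta_j$, lets $J\to\infty$ so that $\Theta_j$ becomes a density $\Theta(\delta)$, obtains the standard rainflow damage integral $R\int_0^1 \Phi'(\delta)N\up{dis}_T(\delta)\,d\delta$, and then invokes Lemma~1 to identify the counting function $N\up{dis}_T(\delta)$ with $\sum_i 1_{[\delta\le\delta_i]}+\sum_i 1_{[\delta\le\delta\up{dis}_i]}$. You instead stay at finite $J$, prove the exact identity $C\up{pwl}=R\sum_i\hat\Phi_J(\delta_i)+R\sum_i\hat\Phi_J(\delta\up{dis}_i)$ with the piecewise-linear interpolant, and take the limit via uniform convergence of $\hat\Phi_J$ to $\Phi$; this buys you something the paper does not state, namely an explicit error bound $R(|\Delta|+|\Delta\up{dis}|)\sup_\delta|\hat\Phi_J(\delta)-\Phi(\delta)|$ and the fact that the approximation is one-sided from above, which is consistent with Fig.~\ref{Fig:cost_err}. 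The crux of both arguments is, however, the same combinatorial fact: under the shallowest-first policy of Theorem~1, the rainflow inequalities guarantee that segments in $[0,x]$ are full when a cycle of depth $x$ begins its discharge, so that cycle empties exactly those segments and leaves the accounting of the enclosing excursion untouched. Your ``additivity lemma'' is precisely the paper's Lemma~1 recast as an insertion/peeling argument, and you honestly flag that the delicate cases (nested oscillations, depths straddling segment boundaries) are where the real work lies --- the paper's own proof of Lemma~1 treats only the innermost-oscillation case and is no more rigorous on these points, so your plan is at least as complete as the published argument, but you should be aware that you have identified rather than discharged the hardest step.
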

	
	\begin{proof}
		First we rewrite equation \eqref{Eq:the_cost} as
		\begin{align}
		\textstyle\sum_{j=1}^{J}c\up{}_j\sum_{t=1}^{T}M\hat{p}\up{dis}_{t,j} = \sum_{j=1}^{J}c\up{}_j\Theta_j\,,
		\end{align}
		where $\Theta_j = \sum_{t=1}^{T}M\hat{p}\up{dis}_{t,j}$ is the total amount of energy discharged at a cycle depth range between $(j-1)/J$ and $j/J$. Once the number of segments $|\mathbf{c}(\Phi)| = J$ approaches infinity, we can rewrite $\Theta_j$ into a function $\Theta(\delta)$ indicating the energy discharged at a specific cycle depth $\delta$, where $\delta\in [0 \; 1]$. With an infinite number of segments, we substitute \eqref{ES:inc} in and rewrite the cycle aging function in \eqref{Eq:the_cost} in a continuous form
		\begin{align}
		C\up{pwl}(\Phi, \mathbf{d}, \mathbf{g}) = \int_{0}^{1}\frac{R}{\eta\up{dis}E\up{rate}}\Theta(\delta)\frac{d\Phi(\delta)}{d\delta}d\delta\,.
		\end{align}
		We define a new function $N\up{dis}_T(\delta)$ the number of discharge cycles of depths equal or greater than $\delta$ during the operation period from $t=0$ to $t=T$, accounting all discharge half cycles and the discharge stage of all full cycles. $N\up{dis}_T(\delta)$ can be calculated by normalizing $\Theta(\delta)$ with the discharge efficiency and the energy rating of the battery
		\begin{align}
		N\up{dis}_T(\delta) = \frac{1}{\eta\up{dis}E\up{rate}}\Theta(\delta)\,,
		\end{align}
		recall that $\Theta(\delta)$ is the amount of energy discharged from the cycle depth $\delta$. This relationship is proved in Lemma 1 after this theorem. 

		Now the proposed cost function becomes
		\begin{align}
		C\up{pwl}(\Phi, \mathbf{d}, \mathbf{g}) = R\int_{0}^{1}\frac{d\Phi(\delta)}{d\delta}N\up{dis}_T(\delta)d\delta\,,
		\label{eq:the2_p1}
		\end{align}
		which is a standard formulation for calculating rainflow fatigue damage~\cite{rychlik1996extremes}, and the function $N\up{dis}_T(\delta)$ is an alternative way of representing a rainflow cycle counting result. We substitute \eqref{Eq:lemma1} from Lemma 1 into \eqref{eq:the2_p1}
		\begin{align}
		&C\up{pwl}(\Phi, \mathbf{d}, \mathbf{g}) \nonumber\\
		& = R\int_{0}^{1}\frac{d\Phi(\delta)}{d\delta}\Bigg( \sum_{i=1}^{|\Delta|}1_{[\delta \leq \delta_i]} + \sum_{i=1}^{|\Delta\up{dis}|}1_{[\delta \leq \delta\up{dis}_i]} \Bigg)d\delta\nonumber\\
		& = R\sum_{i=1}^{|\Delta|}\int_{0}^{1} \frac{d\Phi(\delta)}{d\delta} 1_{[\delta \leq \delta_i]} d\delta + R\sum_{i=1}^{|\Delta\up{dis}|}\int_{0}^{1} \frac{d\Phi(\delta)}{d\delta}1_{[\delta \leq \delta\up{dis}_i]} d\delta\nonumber\\
		&= R\sum_{i=1}^{|\Delta|} \Phi(\delta_i) + R\sum_{i=1}^{|\Delta\up{dis}|} \Phi(\delta\up{dis}_i)\nonumber\\
		&= C\up{ben}(\Phi, \mathbf{d}, \mathbf{g})\,,
		\end{align}
		then it is trivial to see that this theorem stands if the proposed model yields the same counting result $N\up{dis}_T(\delta)$ as the rainflow algorithm. This relationship is proved in Lemma~1.
		
	\end{proof}

	\begin{lemma}
		We assume that the proposed model has an infinite number of segments, then $N\up{dis}_T(\delta)$, as defined in Theorem 2, is the number of discharge cycles of depths equal or greater than $\delta$ during the operation period from $t=0$ to $t=T$, accounting all discharge half cycles and the discharge stage of all full cycles, hence
		\begin{align}
		N\up{dis}_T(\delta) &= \Theta(\delta)/(\eta\up{dis}E\up{rate}) \label{Eq:lemma1_2}\\
		&= \textstyle\sum_{i=1}^{|\Delta|}1_{[\delta \leq \delta_i]} + \sum_{i=1}^{|\Delta\up{dis}|}1_{[\delta \leq \delta\up{dis}_i]}\,,
		\label{Eq:lemma1}
		\end{align}
		where $1_{[x]}$ has a value of one if $x$ is true, and zero otherwise.
	\end{lemma}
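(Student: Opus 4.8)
The plan is to show that the greedy segment policy characterized in Theorem~1 makes the per-depth discharged energy $\Theta(\delta)$ an exact encoding of the rainflow discharge-cycle count, so that both equalities in the lemma follow. I would split the argument into an elementary normalization step and a structural step that carries the real content.

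First, the normalization \eqref{Eq:lemma1_2}. In the infinite-segment limit I index segments by their depth $\delta\in[0,1]$; the segment occupying the band $[\delta,\delta+d\delta]$ holds energy $E\up{rate}\,d\delta$, so emptying it once delivers output energy $\eta\up{dis}E\up{rate}\,d\delta$, consistent with the dynamics \eqref{Eq:the4}. Since $\Theta(\delta)\,d\delta$ is the total output energy drawn from that band over $\mathcal{T}$, dividing by $\eta\up{dis}E\up{rate}\,d\delta$ shows that $\Theta(\delta)/(\eta\up{dis}E\up{rate})$ counts the number of times the depth-$\delta$ band is emptied under the policy. This is pure accounting and is the easy half; it reduces the lemma to proving that this per-band emptying count equals the rainflow count in \eqref{Eq:lemma1}.

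For the structural step I would exploit that, because charge always fills the shallowest available band and discharge always empties the shallowest filled band (Theorem~1), the configuration is a deterministic function of the turning-point sequence $s_0,s_1,\dots$ of the SoC profile, and that the measure of the band emptied by a monotone discharge equals its SoC swing (even though the emptied band need not be the contiguous interval $[0,\delta_i]$). I would then prove the counting identity by induction mirroring the rainflow extraction. Whenever rainflow identifies and removes a nested full cycle --- a pair $s_k,s_{k+1}$ with $\Delta s_2\le\Delta s_1$ and $\Delta s_2\le\Delta s_3$ --- I would show that deleting $s_k,s_{k+1}$ (a) leaves the terminal greedy configuration unchanged, and (b) decreases the cumulative per-depth emptying count by exactly $1_{[\delta\le\Delta s_2]}$. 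The nesting inequalities $\Delta s_1\ge\Delta s_2\le\Delta s_3$ are what make the excursion self-cancelling: its neighbouring half-swings supply and then reabsorb exactly the bands it touches, so the rest of the profile is processed identically to the reduced profile. Iterating until only the rainflow residue of monotone half cycles remains, I process the residue directly, each discharge half cycle of depth $\delta\up{dis}_i$ contributing $1_{[\delta\le\delta\up{dis}_i]}$. Summing gives $N\up{dis}_T(\delta)=\sum_{i=1}^{|\Delta|}1_{[\delta\le\delta_i]}+\sum_{i=1}^{|\Delta\up{dis}|}1_{[\delta\le\delta\up{dis}_i]}$.

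The main obstacle is establishing (a)--(b) rigorously, and in particular that the count decrement is the contiguous indicator $1_{[\delta\le\Delta s_2]}$ rather than the fragmented band actually emptied on any single sub-event. The natural route is to show that the greedy trajectories on the full and reduced profiles agree outside the excised excursion and terminate in the same configuration, so that the only net difference in discharged energy is one unit of band measure at each depth up to $\Delta s_2$. Verifying this under the shallowest-first priority, along with the boundary cases --- the initial fill \eqref{Eq:the3}, discharges that reach the band caps $\overline{e}_j$, and the terminal residue --- is where the bookkeeping must be handled with care; throughout, it is the convex/greedy ordering guaranteed by Theorem~1 that keeps the accounting consistent.
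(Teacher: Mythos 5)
Your proposal is correct and rests on the same structural fact as the paper's proof: under the shallowest-first policy of Theorem~1, each rainflow cycle of depth $x$ contributes exactly one emptying of the contiguous depth band $[0,x]$, so the per-band emptying count $\Theta(\delta)/(\eta\up{dis}E\up{rate})$ reproduces the rainflow exceedance count. The difference is in how the induction is organized. The paper works forward in time: it models the initial energy $e_0$ as a charge from empty, maintains the invariant $N\up{ch}_{t_1}(\delta)-N\up{dis}_{t_1}(\delta)=1$ for all $\delta\le x$ at the instant a discharge of cycle depth $x$ begins (so the bands $[0,x]$ are full and the discharge drains exactly them), and restricts attention to cycles with no nested cycle inside $[t_1,t_2]$, i.e., it implicitly treats innermost cycles first. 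You instead induct over the rainflow extraction order, excising each nested full cycle and comparing the greedy trajectories on the full and reduced profiles; the content of your steps (a)--(b) --- that the excision leaves the terminal configuration unchanged and removes exactly $1_{[\delta\le\Delta s_2]}$ from the count --- is precisely the paper's claim that the nested excursion refills and then re-empties the same shallowest bands. Your version makes explicit the point the paper glosses over (the set of full bands under the greedy policy need not be contiguous, so one must argue that the \emph{net} band emptied over a whole cycle is nevertheless $[0,x]$) and isolates the boundary cases (initial fill, segment caps, residue half cycles) that the paper handles only implicitly; the paper's version buys shorter bookkeeping via the charge/discharge-count invariant. Either route completes the lemma.
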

	\begin{proof}
		\eqref{Eq:lemma1_2} defines $N\up{dis}_T(\delta)$ as the number fo times that energy is discharged from the cycle depth $\delta$, while \eqref{Eq:lemma1} means the number of cycles with depths at least $\delta$. Therefore in this lemma we prove that these two definitions are equivalent, hence the proposed model has the same cycle counting result as the rainflow method.
		
		Let $N\up{dis}_t(\delta)$ be the number of times energy is discharged from the depth $\delta$ during the operation period $[0,t]$, accounting all discharge half cycles and the discharge stage of all full cycles. Similarly, define $N\up{ch}_t(\delta)$ accounting all charge half cycles and the charge stage of all full cycles. 
		\begin{figure}[!htb]
			\centering
			\includegraphics[trim = 10mm 0mm 10mm 0mm, clip, width = .95\columnwidth]{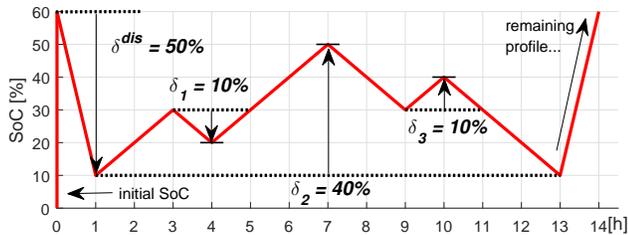}
			\caption{Cycle counting example.}
			\label{Fig:lemma1}
		\end{figure}
		Because we assume charge dispatches cause no aging cost, we can alternatively model battery initial energy level $e_0$ as an empty battery being charged to $e_0$ at the beginning of operation (such as in Fig.~\ref{Fig:lemma1}), hence at $t=0$ we have
		\begin{align}
		N\up{ch}_0(\delta) = \begin{cases} 1 & \delta \leq e_0 \\ 0 & \delta > e_0 \end{cases}\,,\quad N\up{dis}_0(\delta) = 0\,.
		\end{align}
		Now assume at time $t_1$ the battery is switched from charging to discharging, and eventually resulted in a cycle of depth $x$ that ends at $t_2$, regardless whether it is a half cycle or a full cycle. We also assume that there is no other cycles occuring from $t_1$ tp $t_2$, since in the rainflow method  The battery must have been previously charged at least $\delta$ depth worth of energy since we now assume the battery starts from empty. Therefore according to Theorem 1, segments in the range $[0,x]$ must be full at $t_1$, hence
		\begin{align}
		N\up{ch}_{t_1}(\delta) - N\up{dis}_{t_1}(\delta) = 1 \quad \forall \delta \leq x\,,
		\end{align}
		which is a sufficient condition for all discharge energy in this cycle being dispatched from segments in the depth range $[0,x]$, according to Theorem 1. After performing this cycle, all and only segments within the range $[0,x]$ are discharged one more time, in other words, all and only cycle depths in the range $[0,x]$ have one more count at end of this cycle $t_2$ compared to $t_1$ when the discharge begins, hence
		\begin{align}
		N\up{dis}_{t_2}(\delta)-N\up{dis}_{t_1}(\delta) = 1_{[\delta\leq x]}.
		\end{align}
		Therefore the proposed model has the same counting result as to the rainflow method for any cycles, which proves this lemma.

	\end{proof}

	\subsection{Numerical example}

	\begin{figure}[!htb]
		\centering
		\includegraphics[trim = 10mm 0mm 10mm 0mm, clip, width = .95\columnwidth]{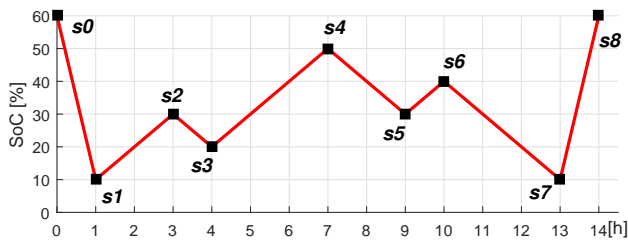}
		\caption{An example of SoC profile.}
		\label{Fig:app}
	\end{figure}
	
	\begin{table}[!hbt]
		\begin{center}
			\centering
			\caption{Battery Operation Example.}
			\label{tab:es_example}
			\begin{tabular}{r  c  c c  c}
				\hline
				\hline
				t & SoC & energy segments & discharge power & cost 	\Tstrut \\
				&     & $[\mathbf{e}_t]$ & $[\mathbf{p}\up{dis}_t]$ & $C_t$ \Bstrut\\
				\hline
				- & - & $\to$ deeper depth $\to$ & $\to$ deeper depth $\to$ & -\Tstrut\Bstrut\\
				\hline
				0  & 60 & 1,1,1,1,1,1,0,0,0,0 & 0,0,0,0,0,0,0,0,0,0 & 0	\Tstrut\Bstrut\\
				\hline
				1  & 10 & 0,0,0,0,0,1,0,0,0,0 & 1,1,1,1,1,0,0,0,0,0 & 25	\Tstrut\Bstrut\\
				\hline
				2  & 20 & 1,0,0,0,0,1,0,0,0,0 & 1,1,1,1,1,0,0,0,0,0 & 0	\Tstrut\Bstrut\\
				\hline
				3  & 30 & 1,1,0,0,0,1,0,0,0,0 & 0,0,0,0,0,0,0,0,0,0 & 0	\Tstrut\Bstrut\\
				\hline
				4  & 20 & 0,1,0,0,0,1,0,0,0,0 & 1,0,0,0,0,0,0,0,0,0 & 1	\Tstrut\Bstrut\\
				\hline
				5  & 30 & 1,1,0,0,0,1,0,0,0,0 & 0,0,0,0,0,0,0,0,0,0 & 0	\Tstrut\Bstrut\\
				\hline
				6  & 40 & 1,1,1,0,0,1,0,0,0,0 & 0,0,0,0,0,0,0,0,0,0 & 0	\Tstrut\Bstrut\\
				\hline
				7  & 50 & 1,1,1,1,0,1,0,0,0,0 & 0,0,0,0,0,0,0,0,0,0 & 0	\Tstrut\Bstrut\\
				\hline
				8  & 40 & 0,1,1,1,0,1,0,0,0,0 & 1,0,0,0,0,0,0,0,0,0 & 1	\Tstrut\Bstrut\\
				\hline
				9  & 30 & 0,0,1,1,0,1,0,0,0,0 & 0,1,0,0,0,0,0,0,0,0 & 3	\Tstrut\Bstrut\\
				\hline
				10 & 40 & 1,0,1,1,0,1,0,0,0,0 & 0,0,0,0,0,0,0,0,0,0 & 0	\Tstrut\Bstrut\\
				\hline
				11 & 30 & 0,0,1,1,0,1,0,0,0,0 & 1,0,0,0,0,0,0,0,0,0 & 1	\Tstrut\Bstrut\\
				\hline
				12 & 20 & 0,0,0,1,0,1,0,0,0,0 & 0,0,1,0,0,0,0,0,0,0 & 5	\Tstrut\Bstrut\\
				\hline
				13 & 10 & 0,0,0,0,0,1,0,0,0,0 & 0,0,0,1,0,0,0,0,0,0 & 7	\Tstrut\Bstrut\\
				\hline
				14 & 60 & 1,1,1,1,1,1,0,0,0,0 & 0,0,0,0,0,0,0,0,0,0 & 0	\Tstrut\Bstrut\\
				\hline
				all & - & - & - & 43	\Tstrut\Bstrut\\
				\hline
				\hline						
			\end{tabular}
		\end{center}
		\vspace{-.5cm}
	\end{table}
	
	We include a step-by-step example to illustrate how the proposed model is a close approximation of the benchmark rainflow cost model using the battery operation profile shown in Fig.~\ref{Fig:app}. To simplify this example, we assume a perfect efficiency of $1$ and that the cycle aging cost function is $100\delta^2$. We consider 10 linearization segments, with each segment representing a 10\% cycle depth range. The proposed model therefore has the following cycle aging cost curve
	\begin{align}
	\mathbf{c}=\{1, 3, 5, 7, 9, 11, 13, 15, 17, 19\}.
	\end{align}
	According to the rainflow method demostrated in Fig.~\ref{Fig:rf} , this example profile has the following cycle counting results
	\begin{itemize}
		\item Two full cycles of depth 10\%, each costs 1
		\item One full cycle of depth 40\% that costs 16
		\item One discharge half cycle of depth 50\% that costs 25
		\item One charge half cycle that costs zero,
	\end{itemize}
	hence the total aging cost identified by the benchmark rainflow-based model is 43. 
	
	We implement this operation profile using the policy in Theorem~1 and record the marginal cost during each time interval. The results are shown in Table~\ref{tab:es_example}. In this table, the first two columns are the time step and SoC. The third column shows the energy level of each linearization segment represented in a vector from $\mathbf{e}_t$. $\mathbf{e}_t$ is a $10\times 1$ vector, and its energy level segments are sorted from shallower to deeper depths. Segment energy levels are normalized so that one means the segment is full, and zero means the segment is empty. The fourth column shows how much energy is discharged from each segment during a time interval, represented by a discharge power vector $\mathbf{p}\up{dis}_t$ and is calculated as (the discharge efficiency is 1)
	\begin{align}
	\mathbf{p}\up{dis}_t = [\mathbf{e}_{t-1}-\mathbf{e}_t]^+\,.
	\end{align}
	The last column shows the operating cost that arises from each time interval, which is calculated as
	\begin{align}
	C_t = \mathbf{c}\mathbf{p}\up{dis}_t\,.
	\end{align}
	
	This example profile results in the same cost of 43 in both the proposed model and the benchmark model, as proved in Theorem~2.
	
	\bibliographystyle{IEEEtran}	
	\bibliography{IEEEabrv,literature}		

\begin{thebibliography}{10}
\providecommand{\url}[1]{#1}
\csname url@samestyle\endcsname
\providecommand{\newblock}{\relax}
\providecommand{\bibinfo}[2]{#2}
\providecommand{\BIBentrySTDinterwordspacing}{\spaceskip=0pt\relax}
\providecommand{\BIBentryALTinterwordstretchfactor}{4}
\providecommand{\BIBentryALTinterwordspacing}{\spaceskip=\fontdimen2\font plus
\BIBentryALTinterwordstretchfactor\fontdimen3\font minus
  \fontdimen4\font\relax}
\providecommand{\BIBforeignlanguage}[2]{{%
\expandafter\ifx\csname l@#1\endcsname\relax
\typeout{** WARNING: IEEEtran.bst: No hyphenation pattern has been}%
\typeout{** loaded for the language `#1'. Using the pattern for}%
\typeout{** the default language instead.}%
\else
\language=\csname l@#1\endcsname
\fi
#2}}
\providecommand{\BIBdecl}{\relax}
\BIBdecl

\bibitem{eu_bat}
\BIBentryALTinterwordspacing
{EUROBAT}, ``{Battery Energy Storage in the EU},'' 2016. [Online]. Available:
  \url{http://eurobat.org/sites/default/files/eurobat_batteryenergystorage_web_0.pdf}
\BIBentrySTDinterwordspacing

\bibitem{isone_outlook}
\BIBentryALTinterwordspacing
{ISO New England}, ``2016 regional electricity outlook,'' 2016. [Online].
  Available:
  \url{https://www.iso-ne.com/static-assets/documents/2016/03/2016_reo.pdf}
\BIBentrySTDinterwordspacing

\bibitem{cal_rm}
\BIBentryALTinterwordspacing
{California ISO}, ``Advancing and maximizing the value of energy storage
  technology - a california roadmap,'' 2014. [Online]. Available:
  \url{https://www.caiso.com/informed/Pages/CleanGrid/EnergyStorageRoadmap.aspx}
\BIBentrySTDinterwordspacing

\bibitem{ferc_rm}
\BIBentryALTinterwordspacing
``{Electric Storage Participation in Markets Operated by Regional Transmission
  Organizations and Independent System Operators (Docket Nos. RM16-23-000)},''
  November 2016. [Online]. Available:
  \url{https://www.ferc.gov/whats-new/comm-meet/2016/111716/E-1.pdf}
\BIBentrySTDinterwordspacing

\bibitem{caiso_bpm}
\BIBentryALTinterwordspacing
CAISO, ``Business practice manual for market operations.'' [Online]. Available:
  \url{https://www.caiso.com/rules/Pages/BusinessPracticeManuals/Default.aspx}
\BIBentrySTDinterwordspacing

\bibitem{vetter2005ageing}
J.~Vetter, P.~Nov{\'a}k, M.~Wagner, C.~Veit, K.-C. M{\"o}ller, J.~Besenhard,
  M.~Winter, M.~Wohlfahrt-Mehrens, C.~Vogler, and A.~Hammouche, ``Ageing
  mechanisms in lithium-ion batteries,'' \emph{Journal of power sources}, vol.
  147, no.~1, pp. 269--281, 2005.

\bibitem{xu2016modeling}
B.~Xu, A.~Oudalov, A.~Ulbig, G.~Andersson, and D.~Kirschen, ``Modeling of
  lithium-ion battery degradation for cell life assessment,'' \emph{IEEE
  Transactions on Smart Grid}, vol.~PP, no.~99, pp. 1--1, 2016.

\bibitem{kintner2010energy}
M.~C. Kintner-Meyer, P.~Balducci, C.~Jin, T.~Nguyen, M.~Elizondo,
  V.~Viswanathan, X.~Guo, and F.~Tuffner, ``Energy storage for power systems
  applications: a regional assessment for the northwest power pool (nwpp),''
  \emph{PNNL-19300, Pacific Northwest National Laboratory, Richland, WA}, 2010.

\bibitem{akhil2013doe}
A.~A. Akhil, G.~Huff, A.~B. Currier, B.~C. Kaun, D.~M. Rastler, S.~B. Chen,
  A.~L. Cotter, D.~T. Bradshaw, and W.~D. Gauntlett, \emph{DOE/EPRI 2013
  electricity storage handbook in collaboration with NRECA}.\hskip 1em plus
  0.5em minus 0.4em\relax Sandia National Laboratories Albuquerque, NM, 2013.

\bibitem{zakeri2015electrical}
B.~Zakeri and S.~Syri, ``Electrical energy storage systems: A comparative life
  cycle cost analysis,'' \emph{Renewable and Sustainable Energy Reviews},
  vol.~42, pp. 569--596, 2015.

\bibitem{pandzic2015near}
H.~Pandzic, Y.~Wang, T.~Qiu, Y.~Dvorkin, and D.~S. Kirschen, ``Near-optimal
  method for siting and sizing of distributed storage in a transmission
  network,'' \emph{Power Systems, IEEE Transactions on}, vol.~30, no.~5, pp.
  2288--2300, 2015.

\bibitem{pozo2014unit}
D.~Pozo, J.~Contreras, and E.~E. Sauma, ``Unit commitment with ideal and
  generic energy storage units,'' \emph{IEEE Transactions on Power Systems},
  vol.~29, no.~6, pp. 2974--2984, 2014.

\bibitem{qiu2016stochastic}
T.~Qiu, B.~Xu, Y.~Wang, Y.~Dvorkin, and D.~Kirschen, ``Stochastic multi-stage
  co-planning of transmission expansion and energy storage,'' \emph{IEEE
  Transactions on Power Systems}, vol.~PP, no.~99, pp. 1--1, 2016.

\bibitem{mohsenian2016optimal}
H.~Mohsenian-Rad, ``Optimal bidding, scheduling, and deployment of battery
  systems in california day-ahead energy market,'' \emph{IEEE Transactions on
  Power Systems}, vol.~31, no.~1, pp. 442--453, 2016.

\bibitem{dunn2011electrical}
B.~Dunn, H.~Kamath, and J.-M. Tarascon, ``Electrical energy storage for the
  grid: a battery of choices,'' \emph{Science}, vol. 334, no. 6058, pp.
  928--935, 2011.

\bibitem{fatemi1998cumulative}
A.~Fatemi and L.~Yang, ``Cumulative fatigue damage and life prediction
  theories: a survey of the state of the art for homogeneous materials,''
  \emph{International journal of fatigue}, vol.~20, no.~1, pp. 9--34, 1998.

\bibitem{li2011crack}
J.~Li, A.~K. Dozier, Y.~Li, F.~Yang, and Y.-T. Cheng, ``Crack pattern formation
  in thin film lithium-ion battery electrodes,'' \emph{Journal of The
  Electrochemical Society}, vol. 158, no.~6, pp. A689--A694, 2011.

\bibitem{wang2011cycle}
J.~Wang, P.~Liu, J.~Hicks-Garner, E.~Sherman, S.~Soukiazian, M.~Verbrugge,
  H.~Tataria, J.~Musser, and P.~Finamore, ``Cycle-life model for
  graphite-lifepo 4 cells,'' \emph{Journal of Power Sources}, vol. 196, no.~8,
  pp. 3942--3948, 2011.

\bibitem{laresgoiti2015modeling}
I.~Laresgoiti, S.~K{\"a}bitz, M.~Ecker, and D.~U. Sauer, ``Modeling mechanical
  degradation in lithium ion batteries during cycling: Solid electrolyte
  interphase fracture,'' \emph{Journal of Power Sources}, vol. 300, pp.
  112--122, 2015.

\bibitem{ramadesigan2012modeling}
V.~Ramadesigan, P.~W. Northrop, S.~De, S.~Santhanagopalan, R.~D. Braatz, and
  V.~R. Subramanian, ``Modeling and simulation of lithium-ion batteries from a
  systems engineering perspective,'' \emph{Journal of The Electrochemical
  Society}, vol. 159, no.~3, pp. R31--R45, 2012.

\bibitem{kassem2012calendar}
M.~Kassem, J.~Bernard, R.~Revel, S.~Pelissier, F.~Duclaud, and C.~Delacourt,
  ``Calendar aging of a graphite/lifepo 4 cell,'' \emph{Journal of Power
  Sources}, vol. 208, pp. 296--305, 2012.

\bibitem{ecker2014calendar}
M.~Ecker, N.~Nieto, S.~K{\"a}bitz, J.~Schmalstieg, H.~Blanke, A.~Warnecke, and
  D.~U. Sauer, ``Calendar and cycle life study of li (nimnco) o 2-based 18650
  lithium-ion batteries,'' \emph{Journal of Power Sources}, vol. 248, pp.
  839--851, 2014.

\bibitem{ruetschi2004aging}
P.~Ruetschi, ``Aging mechanisms and service life of lead--acid batteries,''
  \emph{Journal of Power Sources}, vol. 127, no.~1, pp. 33--44, 2004.

\bibitem{byrne2012estimating}
R.~H. Byrne and C.~A. Silva-Monroy, ``Estimating the maximum potential revenue
  for grid connected electricity storage: Arbitrage and regulation,''
  \emph{Sandia National Laboratories}, 2012.

\bibitem{wang2014degradation}
J.~Wang, J.~Purewal, P.~Liu, J.~Hicks-Garner, S.~Soukazian, E.~Sherman,
  A.~Sorenson, L.~Vu, H.~Tataria, and M.~W. Verbrugge, ``Degradation of lithium
  ion batteries employing graphite negatives and nickel--cobalt--manganese
  oxide+ spinel manganese oxide positives: Part 1, aging mechanisms and life
  estimation,'' \emph{Journal of Power Sources}, vol. 269, pp. 937--948, 2014.

\bibitem{millner2010modeling}
A.~Millner, ``Modeling lithium ion battery degradation in electric vehicles,''
  in \emph{Innovative Technologies for an Efficient and Reliable Electricity
  Supply (CITRES), 2010 IEEE Conference on}.\hskip 1em plus 0.5em minus
  0.4em\relax IEEE, 2010, pp. 349--356.

\bibitem{muenzel2015multi}
V.~Muenzel, J.~de~Hoog, M.~Brazil, A.~Vishwanath, and S.~Kalyanaraman, ``A
  multi-factor battery cycle life prediction methodology for optimal battery
  management,'' in \emph{Proceedings of the 2015 ACM Sixth International
  Conference on Future Energy Systems}.\hskip 1em plus 0.5em minus 0.4em\relax
  ACM, 2015, pp. 57--66.

\bibitem{amzallag1994standardization}
C.~Amzallag, J.~Gerey, J.~Robert, and J.~Bahuaud, ``Standardization of the
  rainflow counting method for fatigue analysis,'' \emph{International journal
  of fatigue}, vol.~16, no.~4, pp. 287--293, 1994.

\bibitem{marsh2016review}
G.~Marsh, C.~Wignall, P.~R. Thies, N.~Barltrop, A.~Incecik, V.~Venugopal, and
  L.~Johanning, ``Review and application of rainflow residue processing
  techniques for accurate fatigue damage estimation,'' \emph{International
  Journal of Fatigue}, vol.~82, pp. 757--765, 2016.

\bibitem{benasciutti2005spectral}
D.~Benasciutti and R.~Tovo, ``Spectral methods for lifetime prediction under
  wide-band stationary random processes,'' \emph{International Journal of
  fatigue}, vol.~27, no.~8, pp. 867--877, 2005.

\bibitem{abdulla2016optimal}
K.~Abdulla, J.~De~Hoog, V.~Muenzel, F.~Suits, K.~Steer, A.~Wirth, and
  S.~Halgamuge, ``Optimal operation of energy storage systems considering
  forecasts and battery degradation,'' \emph{IEEE Transactions on Smart Grid},
  2016.

\bibitem{tran2013energy}
D.~Tran and A.~M. Khambadkone, ``Energy management for lifetime extension of
  energy storage system in micro-grid applications,'' \emph{IEEE Transactions
  on Smart Grid}, vol.~4, no.~3, pp. 1289--1296, 2013.

\bibitem{koller2013defining}
M.~Koller, T.~Borsche, A.~Ulbig, and G.~Andersson, ``Defining a degradation
  cost function for optimal control of a battery energy storage system,'' in
  \emph{PowerTech (POWERTECH), 2013 IEEE Grenoble}.\hskip 1em plus 0.5em minus
  0.4em\relax IEEE, 2013, pp. 1--6.

\bibitem{he2015optimal}
G.~He, Q.~Chen, C.~Kang, P.~Pinson, and Q.~Xia, ``Optimal bidding strategy of
  battery storage in power markets considering performance-based regulation and
  battery cycle life,'' \emph{IEEE Transactions on Smart Grid}, vol.~7, no.~5,
  pp. 2359--2367, Sept 2016.

\bibitem{li2016sufficient}
Z.~Li, Q.~Guo, H.~Sun, and J.~Wang, ``Sufficient conditions for exact
  relaxation of complementarity constraints for storage-concerned economic
  dispatch,'' \emph{IEEE Transactions on Power Systems}, vol.~31, no.~2, pp.
  1653--1654, 2016.

\bibitem{go2016assessing}
R.~S. Go, F.~D. Munoz, and J.-P. Watson, ``Assessing the economic value of
  co-optimized grid-scale energy storage investments in supporting high
  renewable portfolio standards,'' \emph{Applied Energy}, vol. 183, pp.
  902--913, 2016.

\bibitem{nerc_reserve}
\BIBentryALTinterwordspacing
{North American Electric Reliability Corporation}, ``{Reliability Guideline:
  Operating Reserve Management},'' 2013. [Online]. Available:
  \url{http://www.nerc.com/comm/OC/Reliability%20Guideline%20DL/Operating%20Reserve%20Management%20Guideline%20-%2020130718.pdf}
\BIBentrySTDinterwordspacing

\bibitem{GAMS}
R.~E. Rosenthal, \emph{GAMS — A User’s Guide}, 2016.

\bibitem{iso_express}
\BIBentryALTinterwordspacing
``{ISO New England Pricing Reports}.'' [Online]. Available:
  \url{http://www.iso-ne.com/isoexpress/web/reports/pricing/-/tree/ener-mkt-prices}
\BIBentrySTDinterwordspacing

\bibitem{yury2016ensuring}
Y.~Dvorkin, R.~Fernandez-Blanco, D.~S. Kirschen, H.~Pandzic, J.~P. Watson, and
  C.~A. Silva-Monroy, ``Ensuring profitability of energy storage,'' \emph{IEEE
  Transactions on Power Systems}, vol.~PP, no.~99, pp. 1--1, 2016.

\bibitem{isone_rte}
\BIBentryALTinterwordspacing
``Joint filing of iso new england inc. and new england power pool to implement
  sub-hourly settlements; docket no. er16-1838-000.'' [Online]. Available:
  \url{https://www.iso-ne.com/static-assets/documents/2016/06/er16-1838-000.pdf}
\BIBentrySTDinterwordspacing

\bibitem{rychlik1996extremes}
I.~Rychlik, ``Extremes, rainflow cycles and damage functionals in continuous
  random processes,'' \emph{Stochastic processes and their applications},
  vol.~63, no.~1, pp. 97--116, 1996.

\end{thebibliography}
	
	\begin{IEEEbiographynophoto}{Bolun Xu}
		(S'14) received B.S. degrees in Electrical and Computer Engineering
		from Shanghai Jiaotong
		University, Shanghai, China in 2011, and the M.Sc degree in Electrical
		Engineering from Swiss Federal Institute of Technology, Zurich, Switzerland
		in 2014.
		
		He is currently pursuing the Ph.D. degree in Electrical Engineering at the
		University of Washington, Seattle, WA, USA. His research interests include
		energy storage, power system operations, and power system economics.
	\end{IEEEbiographynophoto}
	
	\begin{IEEEbiographynophoto}{Jinye Zhao}
		(M'11) received the B.S. degree from East China Normal University,
		Shanghai, China, in 2002 and the M.S. degree in mathematics from National
		University of Singapore in 2004. She received the M.E. degree in operations
		research and statistics and the Ph.D. degree in mathematics from Rensselaer
		Polytechnic Institute, Troy, NY, in 2007.
		
		She is a lead analyst at ISO New England, Holyoke, MA. Her main interests
		are game theory, mathematical programming, and electricity market modeling.
	\end{IEEEbiographynophoto}
	
	\begin{IEEEbiographynophoto}{Tongxin Zheng}
		(SM'08) received the B.S. degree in electrical engineering
		from North China University of Electric Power, Baoding, China, in 1993, the
		M.S. degree in electrical engineering from Tsinghua University, Beijing, China,
		in 1996, and the Ph.D. degree in electrical engineering from Clemson
		University, Clemson, SC, USA, in 1999. 
		
		Currently, he is a Technical Manager
		with the ISO New England, Holyoke, MA, USA. His main interests are power
		system optimization and electricity market design.
		
	\end{IEEEbiographynophoto}

	\begin{IEEEbiographynophoto}{Eugene Litvinov}
		(SM'06-F'13) received the B.S. and M.S. degrees from the
		Technical University, Kiev, Ukraine, and the Ph.D. degree from Urals
		Polytechnic Institute, Sverdlovsk, Russia. 
		
		Currently, he is the Chief
		Technologist at the ISO New England, Holyoke, MA. His main interests
		include power system market-clearing models, system security, computer
		applications in power systems, and information technology. 
	\end{IEEEbiographynophoto}

	\begin{IEEEbiographynophoto}{Daniel S. Kirschen}
		(M'86-SM'91-F'07) received his electrical and mechanical engineering degree from the Universite Libre de Bruxelles, Brussels, Belgium, in 1979 and his M.S. and Ph.D. degrees from the University of Wisconsin, Madison, WI, USA, in 1980, and 1985, respectively.
		
		He is currently the Donald W. and Ruth Mary Close Professor of Electrical Engineering at the University of Washington, Seattle, WA, USA. His research interests include smart grids, the integration of renewable energy sources in the grid, power system economics, and power system security.
	\end{IEEEbiographynophoto}
	
\end{document}